\def\Z{{\mathbb Z}}\def\T{{\mathbb T}}\def\R{{\mathbb R}}
\def\GG{{\mathcal G}}
\def\cC{{\mathcal C}}
\def\cT{{\mathcal T}}
\def\cD{{\mathcal D}}
\def\cH{{\mathcal H}}
\def\cU{{\mathcal U}}
\def\o{\omega}\def\e{\eta}
\def\be{\beta}
\def\be{\beta}
\def\th{\theta}
\def\i{\infty}
\def\l#1{\langle #1\rangle}
\def\<{\langle}
\def\>{\rangle}
\theoremstyle{plain}
\newtheorem{Thm}{Theorem}
\newtheorem{theo}{Theorem}
\newtheorem{Rem}[Thm]{Remark}
\newtheorem{Lem}[Thm]{Lemma}
\newtheorem{SubLem}[Thm]{Sublemma}
\newtheorem*{Cor*}{Corollary}
\newtheorem*{Prop*}{Proposition}
\newtheorem{Main}{Theorem}
\def\cC{\mathcal C}
\def\e{\varepsilon}
\def\pa{\partial}
\def\th{\theta}
\newcommand{\ba}{\overline{A}}
\newcommand{\cO}{\mathcal{O}}
\def\be{\begin{equation}}
\def\ee{\end{equation}}
\def\ba{{\begin{align}}}
\def\ea{{\end{align}}}
\def\bm{\begin{matrix}}
\def\em{\end{matrix}}
\def\0{{\mathbf 0}}
\newtheorem{thm}{Theorem}
\newtheorem{prop}[thm]{Proposition}
\theoremstyle{remark}
\def\cO{\mathcal{O}}
\def\cC{\mathcal{C}}
\theoremstyle{definition}
\newtheorem{definition}{Definition}
\def\ssm{\smallsetminus}
\renewcommand{\setminus}{\ssm}
\newcommand{\eps}{{\epsilon}}
\newcommand{\om}{{\omega}}
\newcommand{\N}{{\mathbb N}}
\def\B0{{\bold{0}}}
\def\be{\begin{equation}}
\def\ee{\end{equation}}
\def\Empty{}
\newcommand\oplabel[1]{
  \def\OpArg{#1} \ifx \OpArg\Empty {} \else
  	\label{#1}
  \fi}
\newcommand{\comm}[1]{}
\newcommand{\comment}[1]{}
\definecolor{orange}{rgb}{1,0.5,0}
\begin{document}

\title[Isolated elliptic fixed points for smooth Hamiltonians]{Isolated elliptic fixed points for smooth Hamiltonians}
\author{Bassam Fayad and Maria Saprykina}
\address{
B.Fayad: IMJ-PRG CNRS, France; M. Saprykina: Inst. f\"or Matematik,
KTH, 10044 Stockholm, Sweden\\ }
\email{bassam.fayad@imj-prg.fr, masha@kth.se}
\date{\today}
\thanks{The first author is supported by  ANR BEKAM and ANR GeoDyM and  the project BRNUH. This work was accomplished while the first author was affiliated to the Laboratorio Fibonacci of the Scuola Normale  Superiore  di Pisa}


\begin{abstract}
We construct on $\R^{2d}$, for any $d \geq 3$,  smooth Hamiltonians  having an elliptic equilibrium with an arbitrary frequency, that is not accumulated by a positive measure set of invariant tori. For $d\geq 4$, the Hamiltonians we construct have not any invariant torus of dimension $d$. 
Our examples are obtained by a version of the successive conjugation scheme {\it \`a la}  Anosov-Katok.
\end{abstract}

\maketitle

\section*{Introduction} 

KAM theory  (after Kolmogorov Arnol'd and Moser) asserts that generically an elliptic fixed point of a Hamiltonian system is stable in a probabilistic sense, or {\it KAM-stable:  the fixed point is accumulated by a positive measure set of invariant Lagrangian tori}. 
 In classical KAM theory, an elliptic fixed point is shown to be KAM-stable under the hypothesis that the frequency vector at the fixed point is non resonant (or just sufficiently non-resonant) and that the Hamiltonian is sufficiently smooth and satisfies  the Kolmogorov non degeneracy condition that involves its Hessian matrix at the fixed point. Further development of the theory allowed to relax the non degeneracy condition. In \cite{EFKchenciner} KAM-stability was established for non resonant elliptic fixed points under the R\"ussmann non-planarity condition on the Birkhoff normal form of the Hamiltonian.

 The problem is more tricky if no non-degeneracy conditions are imposed on the Hamiltonian. In the analytic setting, no examples are known of an elliptic fixed point with a non-resonant frequency $\omega_0$ that is not KAM-stable or Lyapunov unstable (none of these two properties implies the other).  
 
It was conjectured by M. Herman in his ICM98 lecture \cite{H}
that for analytic Hamiltonians, KAM-stability holds in the neighborhood of a  an elliptic fixed point if its frequency vector is assumed to be Diophantine. The conjecture is known to be true in two degrees of freedom \cite{R}, but remains open in general. Partial results were obtained in \cite{EFKchenciner} and \cite{EFK}.

Below analytic regularity, Herman proved that KAM-stability of a Diophantine equilibrium holds without any twist condition for smooth Hamiltonians in 2 degrees of freedom (see \cite{H2}, \cite{FK} and \cite{EFK}).
In his ICM98 lecture \cite[\S 3.5]{H}, he announced  that  KAM-stability of Diophantine equilibria does not hold for smooth Hamiltonians\footnote{Herman actually raised the problem in the very related context of 
symplectic maps.}  in four or more degrees of freedom, without giving any clew about the counter-examples he had in mind. He also announced that nothing was known about  KAM-stability of Diophantine equilibria  for smooth Hamiltonians in three degrees of freedom. 

In this note, we settle this problem by constructing examples of smooth Hamiltonians  for any $d \geq 3$  having non KAM-stable elliptic equilibria with arbitrary frequency.  We now state our results more precisely.

 Let
$\o_0\in\R^d$  and let
$$
 (*) \quad \left\{\begin{array}{l}
H(x,y)=\l{\o_0,r} +\cO^3(x,y)\\
r=(r_1,\dots,r_d),\quad r_j=\frac12(x_j^2+y_j^2)
\end{array}\right. $$
be a smooth function defined in a neighborhood of $(0,0)$. The Hamiltonian system associated to $H$ is given by the vector field $X_H=(\pa_{y}H, -\pa_{x}H)$, namely
$$  \quad\left\{\begin{array}{l}
\dot x=\pa_{y}H(x,y)\\ 
\dot y=-\pa_{x}H(x,y).\end{array}\right.$$
The flow of $X_H$ denoted by $\Phi_H^t$ has an elliptic fixed point at the origin with frequency vector $\o_0$.

\medskip

In \cite{EFK}, it was shown that for any $\o_0 \in \R^d, d\geq 4$, it is possible to construct  $C^\infty$ (Gevrey) Hamiltonians $H$ with a smooth invariant torus, on which the dynamics is the translation of frequency $\o_0$, that is not  accumulated by a positive measure of invariant  tori. In this note we adapt the latter construction to the context of elliptic equilibria and we extend it to the three degrees of freedom case. 

It is a common knowledge that creating instability in the neighborhood of a fixed point is more delicate than in the context of invariant tori, mainly because the action angle coordinates are singular in the neighborhood of the axes $\{r_i=0\}$.  For instance, when all the coordinates of $\om_0$ are of the same sign, the fixed elliptic point is Lyapunov stable, while it is easy to produce examples of diffusive and isolated invariant tori for any resonant frequency vector, even in the analytic category (see \cite{Sev}).

\begin{definition} 
We say that $\Phi^{t}_{H}$ is {\it diffusive} if given any $A>0$ there exists $p$ and $t_1,t_2 \in \R$ such that $|\Phi^{t_1}_{H}(p)|\leq A^{-1}$ and $|\Phi^{t_2}_{H}(p)|\geq A$. 
\end{definition}

Obviously, if the flow is diffusive, the origin is not Lyapunov stable.

\begin{Main} \label{theorem.main4}  For any $\om_0 \in \R^d, d \geq 4$, there exists $H \in C^{\infty} (\mathbb R^{2d})$ as in $(*)$,
such that $\Phi^t_H$ has no invariant torus of dimension $d$. More precisely, the manifolds $\{r_i=0\}$ for $i\leq d$, are foliated by invariant tori of dimension $\leq d-1$ and all other obits accumulate on these manifolds or at infinity. 

Moreover, if the coordinates of $\om_0$ are not all of the same sign, then $\Phi^t_H$ is diffusive. 
\end{Main}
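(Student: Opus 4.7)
The plan is to produce $H$ as a $C^\infty$ limit of a sequence of Hamiltonians obtained by a successive conjugation scheme in the spirit of Anosov--Katok, adapting the construction of \cite{EFK} from the invariant-torus setting to the elliptic fixed-point setting. One writes $H_n = h_n \circ G_n^{-1}$, where $h_n(r) = \langle \omega_n, r \rangle$ with $\omega_n \in \mathbb{Q}^d$ a rational vector converging very rapidly to $\omega_0$, and $G_n = g_1 \circ g_2 \circ \cdots \circ g_n$ is an accumulating composition of symplectic diffeomorphisms of $\mathbb{R}^{2d}$. Each $g_k$ will be the identity to infinite order at the origin, and supported in a bounded annular region contained in $\bigcap_{i=1}^d \{r_i > \eta_k\}$ for a suitable sequence $\eta_k \to 0$.

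\emph{Convergence.} Since $\omega_n$ has common denominator $q_n$, the flow of $h_n$ is $T_n$-periodic with $T_n = 2\pi q_n$. I choose $g_{n+1}$ inside the (large) sub-centralizer consisting of symplectic diffeomorphisms that actually preserve $h_n$, i.e.\ $h_n \circ g_{n+1} = h_n$. This yields the key identity
\[
H_{n+1} - H_n \;=\; \bigl(h_{n+1} - h_n\bigr) \circ g_{n+1}^{-1} \circ G_n^{-1},
\]
which is purely governed by $\omega_{n+1} - \omega_n$. Hence, choosing $|\omega_{n+1} - \omega_n|$ to decay fast enough relative to the $C^k$-norms of $G_n^{-1}$ and $g_{n+1}^{-1}$, one gets $C^\infty$ convergence $H_n \to H$.

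\emph{Dynamical construction of $g_{n+1}$.} Preserving $h_n = \langle \omega_n, r\rangle$ does not force $g_{n+1}$ to preserve the finer foliation $\{r = \mathrm{const}\}$: it only forces it to preserve the hyperplane foliation $\{\langle \omega_n, r\rangle = \mathrm{const}\}$ in action space. The freedom within a single level hyperplane is thus $(d-1)$-dimensional, and for $d \geq 4$ this leaves enough room to transport, at every step, a torus $\{r = r_0\}$ onto a torus $\{r = r_1\}$ with $r_1$ far from $r_0$ inside the same hyperplane. A standard inductive book-keeping (over a countable family of target tori and escape thresholds) then forces, in the $C^\infty$ limit, every orbit of $H$ outside the singular locus $\bigcup_i \{r_i = 0\}$ to accumulate either on that locus or at infinity, and in particular no $d$-dimensional invariant torus can persist. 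When $\omega_0$ has coordinates of mixed signs, the hyperplanes $\{\langle \omega_n, r\rangle = c\}$ are unbounded in the positive orthant, so the targets $r_1$ can be sent arbitrarily far from the origin, yielding a diffusive orbit.

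The main obstacle is the interplay between the singularity of the action-angle coordinates at the origin and the need for $H$ to have the prescribed form $(*)$. It is handled by arranging each $g_k$ to be tangent to the identity to infinite order at $0$, so that the Taylor series of $H_n$ at the origin is exactly $\langle \omega_n, r \rangle$ and the limit $H$ has Taylor series $\langle \omega_0, r \rangle$ there, with the $\cO^3(x,y)$ tail generated entirely by the (origin-flat) effect of the $g_k$. A secondary point is to preserve the invariant coordinate planes $\{r_i = 0\}$ throughout: this is ensured by picking each $g_k$ equal to the identity in a neighborhood of each such plane, so that the restricted Hamiltonian there retains its foliation by lower-dimensional invariant tori, giving the claimed structure.
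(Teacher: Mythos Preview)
Your route and the paper's diverge at the base Hamiltonian. The paper does not approximate $\omega_0$ by rationals; it replaces $\langle\omega_0,r\rangle$ by a twisted integrable model
\[
H_0^4(r)=\langle\omega_0+f(r_4),\,r\rangle,\qquad f=(f_1,f_2,f_3,0),
\]
with $f$ flat at $0$ and built (Proposition~\ref{colimacon}) so that on each interval $I_n$ of an increasing cover of $(0,\infty)$ two of $f_1,f_2,f_3$ are constant and the associated frequency pair is \emph{Liouville}. Every conjugacy $U\in\cU_4$ preserves $r_4$, so $r_4$ is a genuine first integral, and on each slab $\{r_4\in I_n\}$ one runs the two-degree-of-freedom Liouville mechanism of Proposition~\ref{lemma.conjugacy} (via Lemmas~\ref{lem_u} and~\ref{lemma_U}) in the two constant directions. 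A $G_\delta$ argument in the closure $\bar{\cH_4}$ then yields $H$, and Theorem~\ref{theorem.main4} is deduced from Theorem~\ref{theo.gdelta} in a few lines. The threshold $d\geq4$ has a precise meaning here: one parameter direction $r_4$ plus three action directions $r_1,r_2,r_3$, so that constant Liouville \emph{pairs} can alternate smoothly across the overlapping intervals $I_{3n},I_{3n\pm1}$.

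Your sketch has a real gap at the step where $g_{n+1}$ is supposed to create drift. A symplectic map that ``transports a torus $\{r=r_0\}$ onto a torus $\{r=r_1\}$'' sends flat tori to flat tori; every $H_n$ then still carries a full foliation by Lagrangian $d$-tori, and nothing in your argument prevents the limit $H$ from inheriting such tori. What is actually needed---and what Lemma~\ref{lemma_U} does---is that each torus $\{r=r_0\}$ be \emph{spread} by the conjugacy so that its image reaches the margin sets; this is achieved through generating functions containing $\sin(2\pi\langle q,\Theta\rangle)$, with the smallness of $\langle q,\omega\rangle$ (Liouville!) used to keep $\|H_0\circ U^{-1}-H_0\|_s$ small while the image torus stretches. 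In your scheme no analogous source of smallness is identified: rationality of $\omega_n$ gives a large centralizer, but not the uniform near-commutation estimate needed to push \emph{every} torus to the boundary. Relatedly, your dimension count does not explain the threshold: with mixed signs the level sets $\{\langle\omega_n,r\rangle=c\}$ are already unbounded in the positive orthant for $d=2$, yet Herman's last geometric theorem forbids the conclusion for Diophantine $\omega_0$ there---so your outline, taken at face value, proves too much.
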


In the case $d=3$, our examples will have invariant Lagrangian tori of maximal dimension (equal to $3$) that accumulate the origin, but only for $r_3$ in a countable set.  

\begin{Main} \label{theorem.main3}  For any $\om_0 \in \R^3$, there exists $H \in C^{\infty} (\mathbb R^{6})$  as in $(*)$, and a sequence $\{a_n\}_{n \in \Z}$ of real numbers such that $\displaystyle  \lim_{n \to -\infty} a_n =0$
and $\displaystyle \lim_{n \to +\infty} a_n =+\infty$, such that the manifolds
$\{r_3=a_n\}$, as well as
$\{r_i=0\}$ for $i\leq 3$,   are foliated by invariant tori and such that 
all other obits accumulate on these manifolds or at infinity. 

  Moreover, if the coordinates of $\om_0$ are not all of the same sign, then $\Phi^t_H$ is diffusive. 
\end{Main}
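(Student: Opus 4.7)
I work in action-angle coordinates $(r,\theta) \in \R^3_+ \times \T^3$ away from the axes $\{r_i = 0\}$ and seek $H$ of the form $(*)$ as a $C^\infty$ limit of Hamiltonians constructed by the Anosov--Katok successive conjugation scheme. A specific feature of the $d=3$ case is that the restriction of $H$ to any slice $\{r_3 = c\}$ is a two-degree-of-freedom system, and by Herman's theorem \cite{H2} such a restriction is KAM-stable at any Diophantine induced frequency. This forces the persistence of Lagrangian tori on some slices, and the best one can aim at is that they be confined to a countable family $\{r_3 = a_n\}$ together with the axes, which is precisely the structure announced in the theorem.

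\textbf{The inductive construction.} I adapt the scheme used for Theorem A in \cite{EFK} by building inductively a sequence of real numbers $a_n$, $n \in \Z$, with $a_n \to 0$ as $n \to -\infty$ and $a_n \to +\infty$ as $n \to +\infty$, together with symplectic diffeomorphisms $\Psi_n$ and smooth Hamiltonians $H_n = H_{n-1} \circ \Psi_n^{-1}$, starting from $H_0(r) = \lan \om_0, r \ran$. At step $n$, the conjugation $\Psi_n$ is supported in a thin annular shell $\{c_n^- < r_3 < c_n^+\}$ situated between two adjacent previously chosen slices, and is designed to produce, after conjugation, a drift of $r_3$ inside that shell while leaving the slices $\{r_3 = a_m\}$ pointwise invariant. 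I would choose the $\Psi_n$ in a Gevrey class with norms decaying fast enough that $H = \lim H_n$ is $C^\infty$ and extends smoothly across the axes $\{r_i = 0\}$ to a Hamiltonian on $\R^6$ of the form $(*)$.

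\textbf{Verification of the torus structure.} By construction, each slice $\{r_3 = a_n\}$ and each axis $\{r_i = 0\}$ is preserved by every $\Psi_n$, hence is invariant under $\Phi_H^t$. The restriction of $H$ to such a slice is, after a residual conjugation, integrable in two degrees of freedom and therefore carries a Lagrangian foliation by invariant tori. To exclude invariant $3$-tori elsewhere, I would argue that any such torus $T$ must project to a bounded interval of $r_3$ values and therefore intersects at least one annular shell on which $\Psi_n$ enforces a uniform drift of $r_3$, contradicting invariance. The accumulation statement for all other orbits follows from the same drift mechanism, and if the coordinates of $\om_0$ are not all of the same sign, the direction of the drift can be arranged so that orbits escape in some $r_i$ while remaining near the origin in another, yielding diffusivity.

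\textbf{Main obstacle.} The crux, specific to $d=3$, is the tension between the Gevrey convergence of the $H_n$, which forces the perturbations $\Psi_n$ to become exponentially small, and the requirement that the drift they generate remain coercive across the thinning shells so as to destroy all would-be $3$-tori. The shell thicknesses, the positions of the $a_n$ (whose induced frequencies should be Diophantine on the slice itself yet sufficiently Liouville in the neighbouring shells to allow a destructive conjugation), and the angular supports of the $\Psi_n$ must all be finely coordinated. A further subtlety is the smooth extension across the singular locus of action-angle coordinates, which I would handle as in \cite{EFK} by arranging that each $\Psi_n$ vanishes to infinite order along the axes $\{r_i = 0\}$, ensuring that the resulting $H$ admits a Taylor expansion $\lan \om_0, r\ran + \cO^3(x,y)$ at the origin.
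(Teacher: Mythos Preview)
Your proposal has a genuine gap at the very starting point: you launch the Anosov--Katok scheme from the bare linear Hamiltonian $H_0(r)=\langle\omega_0,r\rangle$ and then speak of conjugations $\Psi_n$ that are $C^\infty$-small yet generate coercive drift. For an \emph{arbitrary} frequency $\omega_0$ (in particular a Diophantine one) this cannot work: the smallness of $H_0\circ\Psi_n^{-1}-H_0$ in $C^s$ comes, in every Anosov--Katok construction, from a resonance or Liouville relation $\langle q,\omega\rangle\ll |q|^{-s}$, and a Diophantine $\omega_0$ offers none. Your sentence about choosing the $a_n$ so that the induced frequency is ``Diophantine on the slice yet sufficiently Liouville in the neighbouring shells'' is not a mechanism---the frequency of $\langle\omega_0,r\rangle$ is the constant $\omega_0$ on every slice.

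The paper resolves this by first replacing $\langle\omega_0,r\rangle$ with a carefully engineered integrable Hamiltonian $H_0^3(r)=\langle\omega_0+f(r_3),r\rangle$, where $f=(f_1,f_2,f_3)$ is $C^\infty$-flat at $0$ and, on each interval $I_n=[a_{n-1},a_n]$ of $r_3$-values, makes a \emph{pair} of the shifted frequencies $\omega_{0,i}+f_i(r_3)$ constant and \emph{Liouville} (this is the content of Proposition~\ref{colimacon}). With this in hand, the basic building block is the two-degree-of-freedom Liouville construction (Lemma~\ref{lem_u} and Proposition~\ref{lemma.conjugacy}): a generating-function conjugation that, thanks to the Liouville pair, is $C^s$-tiny yet stretches orbits in the $(r_{i_1},r_{i_2})$-plane out to the margins $\{r_{i_j}<A^{-1}\}\cup\{r_{i_j}>A\}$. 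The drift is therefore primarily in $r_1,r_2$ (with $r_3$ a parameter on intervals of type $3m$) rather than in $r_3$ as you propose; on intervals of the other two types $r_3$ itself is one of the two drifting variables but stays confined to $I_n$, which is why the slices $\{r_3=a_n\}$ survive. The absence of other $3$-tori is then immediate: any orbit off the axes and off the $a_n$-slices is shown to accumulate on those sets or at infinity, hence cannot sit on a compact invariant torus. Finally, the paper packages all of this not as an explicit inductive limit but as a dense $G_\delta$ statement inside the closure $\bar{\cH_3}$ of conjugates of $H_0^3$, which shortens the convergence bookkeeping you worry about in your ``main obstacle'' paragraph.
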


\begin{Rem} {\rm
The same construction can be carried out for invariant quasi-periodic tori and gives examples of KAM-unstable tori with arbitrary frequency in 3 degrees of freedom.  } \end{Rem}

\begin{Rem} {\rm  Our examples are obtained by a successive
    conjugation scheme {\it \`a la}  Anosov-Katok \cite{AK},
and the flows  that we obtain are rigid in the sense that their iterates along a subsequence of time converges to identity in the $C^\infty$ topology. } \end{Rem}  

\begin{Rem} {\rm
In  case all   the coordinates of $\om_0$ are of the same sign, there are naturally no diffusive orbits since
the equilibrium is Lyapunov stable. } \end{Rem}

In the case where not all the components of $\om_0$ are of the same sign and $d\geq 3$, Douady gave in \cite{douady} examples of elliptic fixed points with diffusive trajectories. However, his construction, that produces actually examples with an arbitrarily chosen Birkhoff normal form  at the fixed point, does not overrule  KAM-stability.

\section{Notations} 

\noindent --A vector  $\o_0 \in \R^d$ is said to be non-resonant if for any $k
\in \Z^d\setminus \{0\}$ we have that $| \left< k,\o \right> |\neq 0$,
where $\left< \cdot ,\cdot \right>$ denotes the usual scalar product.

\noindent --A vector $\o_0 \in \R^d$ is said to be Diophantine if there exist $N>0$ and $\gamma>0$  such that for any  $k
\in \Z^d\setminus \{0\}$ we have that $|\left< k,\o_0 \right>|\geq \gamma \|k\|^{-N}$.

\noindent --A non-resonant vector $\o_0$ is said to be Liouville if it is not Diophantine.

\noindent --We denote by  $\cO^T_H(p)$ the
orbit of length $T$ of the point $p$ by the Hamiltonian flow of $H$. The full orbit of $p$ is denoted by  $\cO^\infty_H(p)$.

\noindent --The notation of type $\{r_i<A\}$ should be understood as
 $\{(r,\th)\mid r_i<A\}$.

\noindent --We shall say, with a slight abuse of notation, that ${\cO^\infty_H(p)}$ 
accumulates on $\{r_i=\infty\}$ for some $i=1,\dots ,d$ if $\text{proj}_{r_i} {\cO^{t_j}_H(p)} \to
\infty$ over a sequence of times $(t_j)$.

\section{Orbits accumulating the axis and diffusive orbits} 

\subsection{Two degrees of freedom.} As discussed earlier, in $2$ degrees of freedom it follows from the Last Geometric Theorem
of Herman  (see \cite{H2}, \cite{FK} and \cite{EFK}) that if $\o_0 \in \R^2$ is Diophantine then
if $H \in C^{\infty} (\mathbb R^{4})$ is as in $(*)$, then the origin is KAM-stable for $\Phi^t_H$. 

We will be interested in constructing close to integrable
non KAM-stable 
(and diffusive if the frequency vector satisfies $\o_{0,1}\o_{0,2}<0$) examples in 2 degrees of freedom when the frequency is Liouville. Consider 
\begin{equation}\label{def_H02}
H_0^2(r)=\l{\omega_0,r}.
\end{equation}
Let $\cU_2$ be the set of
symplectomorphisms $U$ such that $U(r,\th)=(r,\th) $  near 
the axes $\{r_i=0\}$, $i=1,2$, as well as for $|r|$ sufficiently
large.
Consider the class of conjugates of $H_0^2$ 
by the elements of $\cU_2$, and denote its $C^\infty$-closure by $\bar{ \cH_2}$.

\begin{theo} \label{theo.gdelta2} Let $H_0^2(r)=\l{\omega_0,r}$ for a 
Liouville vector $\o_0 \in \R^2$. 
Let $\cD$ be the set of Hamiltonians $H \in \bar{\cH_2}$ such that 
for each $p \in \R^4$ 
we have that 
$\cO^\infty_H(p)$ 
accumulates on at least one of the following sets: $\{|r|=\infty\}$,
$\{r_{1}=0\}$ or $\{r_{2}=0\}$.
 
 If $\o_{0,1} \o_{0,2}<0$, then we assume moreover that for $H \in
 \cD$, for every $A>0$, there exists 
 $p'\in \R^4$ such that
$\cO^\infty_H(p')$ intersects both  $\{|r|\leq A^{-1} \}$ and $\{|r| \geq A\}$.

Then $\cD$ contains  a dense (in the $C^\infty$ topology) $\GG^\delta$ subset of $\bar{\cH_2}$. 
\end{theo}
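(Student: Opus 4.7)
My plan is to realize $\cD$ (or a subset thereof) as a countable intersection of open dense subsets of $\bar{\cH_2}$, and then invoke Baire's theorem. Fix a countable dense sequence $\{p_k\}_{k\geq 1}$ in $\{p\in\R^4 : r_1(p)r_2(p)\neq 0\}$, and for each pair of positive integers $k,n$ set
$$\cD_{k,n} := \bigl\{H\in\bar{\cH_2} : \exists\, t\in\R,\ \Phi_H^t(p_k)\in\{r_1<1/n\}\cup\{r_2<1/n\}\cup\{|r|>n\}\bigr\}.$$
In the opposite-sign case $\omega_{0,1}\omega_{0,2}<0$, analogously define $\cD'_n\subset\bar{\cH_2}$ as those Hamiltonians for which some orbit visits both $\{|r|\leq 1/n\}$ and $\{r_1\geq n\}\cap\{r_2\geq n\}$. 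Each of these sets is open by continuous dependence of finite-time Hamiltonian flows on the Hamiltonian in the $C^\infty$ topology. Any $H$ in $\bigcap_{k,n}\cD_{k,n}\cap\bigcap_n\cD'_n$ lies in $\cD$, since continuity of $(H,p)\mapsto\Phi_H^t(p)$ promotes the accumulation statement from the dense set $\{p_k\}$ to every $p$ off the axes. It therefore suffices to establish density of each $\cD_{k,n}$ and each $\cD'_n$ in $\bar{\cH_2}$.

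To prove density of $\cD_{k,n}$, fix $\bar H\in\bar{\cH_2}$, $\epsilon>0$, $N\geq 1$. By definition of $\bar{\cH_2}$, after a $C^N$-error $<\epsilon/2$ I may assume $\bar H=U^*H_0^2$ with $U\in\cU_2$, and the task is to produce $V\in\cU_2$ such that $(UV)^*H_0^2$ is $\epsilon$-close to $\bar H$ in $C^N$ and lies in $\cD_{k,n}$. The Anosov--Katok trick uses the Liouville property of $\omega_0$: pick a rational approximation $\omega_*=(p,q)/d\in\Q^2$, so that $H_0^{2,*}(r):=\langle\omega_*,r\rangle$ has a fully periodic flow of period $T_*$, and let $V$ be a symmetry of $U^*H_0^{2,*}$ (i.e.\ $V^*(U^*H_0^{2,*})=U^*H_0^{2,*}$). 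The commutant is infinite-dimensional, generated by Hamiltonians of the form $F(r_1,r_2,q\theta_1-p\theta_2)$ pulled back by $U$, so this constraint leaves plenty of freedom. Under it,
$$(UV)^*H_0^2-U^*H_0^2=(V^*-\id)\bigl(U^*\langle\omega_0-\omega_*,r\rangle\bigr),$$
with $C^N$-norm bounded by a constant depending on $U$ and $V$ times $|\omega_0-\omega_*|$. After the complexity of $U$ and $V$ is fixed, I choose $\omega_*$ so close to $\omega_0$ (Liouville!) that this difference is $<\epsilon/2$.

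It remains to design $V$ so that the orbit of $p_k$ under $\Phi^t_{(UV)^*H_0^2}$ enters the target for some $t\in[0,T_*]$. On this time scale, the flow agrees with $\Phi^t_{U^*H_0^{2,*}}$ up to an error $\lesssim T_*|\omega_0-\omega_*|$, and the latter is the $UV$-conjugate of the rotation $R^{\omega_*}_t$. Thus, to first approximation, the orbit is the closed curve $(UV)^{-1}(R^{\omega_*}_t(UV(p_k)))$, $t\in[0,T_*]$. The $H_0^{2,*}$-level set through $UV(p_k)$ is a line segment in the $(r_1,r_2)$-plane that is unbounded when $\omega_{0,1}\omega_{0,2}<0$ and reaches both axes when $\omega_{0,1}\omega_{0,2}>0$; by choosing the generating function $F$ to induce a large distortion at a point of this level set close to the axis (or, in the opposite-sign case, at infinity), the rotation orbit after pulling back through $UV$ can be forced to pass through $\{r_1<1/n\}$, $\{r_2<1/n\}$, or $\{|r|>n\}$. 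Density of $\cD'_n$ is handled identically, the chosen orbit traversing the unbounded level set between the two target regions.

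The main obstacle is reconciling the flatness requirement $V\in\cU_2$ --- so that $V-\id$ vanishes to infinite order on $\{r_1r_2=0\}$ --- with the need, in the same-sign case, to move an orbit point within $1/n$ of the axis, where $V$ is forced to be trivial. The resolution is to take $F$ supported in a shrinking neighborhood of a segment of the level set that genuinely approaches the axis, compatible with flatness on $\{r_i=0\}$ itself; this is exactly the mechanism used in the Anosov--Katok iterative schemes. Putting the estimates together, for every $k,n$ I obtain a Hamiltonian in $\cD_{k,n}$ (resp.\ $\cD'_n$) arbitrarily $C^\infty$-close to $\bar H$, and Baire's theorem completes the proof that $\cD$ contains a dense $G^\delta$ in $\bar{\cH_2}$.
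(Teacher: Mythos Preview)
Your $G^\delta$ scheme has a genuine gap at the step where you pass from the countable dense set $\{p_k\}$ to all off-axis points. The assertion that ``continuity of $(H,p)\mapsto\Phi_H^t(p)$ promotes the accumulation statement from the dense set $\{p_k\}$ to every $p$'' is false: accumulation of an orbit on a closed set is \emph{not} a closed condition in the initial point. Concretely, if $p_{k_j}\to p$ and each orbit $\cO^\infty_H(p_{k_j})$ visits $\{r_1<1/n\}\cup\{r_2<1/n\}\cup\{|r|>n\}$, this happens at times $t_{j,n}$ over which you have no control; as $j\to\infty$ these times may diverge, and continuity of the flow gives nothing in the limit. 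Nothing rules out the existence of a single invariant torus through $p$ while a dense set of nearby orbits wander off. Thus your intersection $\bigcap_{k,n}\cD_{k,n}$ need not be contained in $\cD$.

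The paper avoids this by building the open sets with a \emph{universal} quantifier over a compact box rather than an existential one over a countable set: one sets
\[
\cD(A,T)=\bigl\{H\in\bar{\cH_2}:\ \forall\,P\in[A^{-1},A]^2\times\T^2,\ \cO^T_H(P)\cap M(A)\neq\emptyset\bigr\},
\]
with $M(A)$ the margin set. Openness of $\cD(A,T)$ follows from compactness of the box and continuity of the time-$T$ flow; and if $H\in\bigcap_A\bigcup_T\cD(A,T)$ then \emph{every} off-axis $p$ lies in the box for all large $A$, hence its orbit meets $M(A)$ for arbitrarily large $A$, giving the accumulation. The price is that density now requires a conjugacy that moves \emph{every} point of the box to the margin simultaneously; this is the content of the paper's Proposition (built as a long composition $u_1\circ\cdots\circ u_N$ of generating-function maps exploiting $|\langle q_j,\omega_0\rangle|\ll 1$), and is substantially harder than moving a single $p_k$. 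Your density sketch, aimed only at one point, would not suffice here even if the circularity between the choice of $V$ and that of $\omega_*$ were properly resolved.
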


\begin{Rem} {\rm  Note that for $H \in \cD$ and for $i=1,2$, the sets
    $\{ r_i=0\}$ are foliated by invariant tori of dimension $1$ on which
    the dynamics is the rotation by angle $\o_{0,j}$ $j\in
    \{1,2\}\setminus \{i\}$. These are the only invariant tori for
    $H$.
} 
\end{Rem}  

\begin{Rem} {\rm  The construction can be extended to any degrees of
freedom $d\geq 2$ and any Liouville vector $\o \in \T^d$.  
} \end{Rem}

\subsection{Four degrees of freedom and higher.}

We consider $d=4$, the case $d\geq 5$ being similar. Fix $\o_0 \in \R^4$.
To prove Theorem \ref{theorem.main4} we will use the same technique of construction that serves in the Liouville $2$ degrees of freedom construction. 

We will first introduce a
completely integrable flow with a fixed point at the origin of frequency $\o_0$ following \cite{EFK}. It will have the form 
\begin{equation} \label{def_H4}
H_0^4(r)=\l{\omega (r_4), r}= \l{\omega_0 +f(r_4), r} ,
\end{equation}
where we use action coordinates  $r_j(x,y)=(x_j^2+y_j^2)/2$ and  where
$$
f(r_4)= (f_1(r_4),f_2(r_4),f_3(r_4),0)
$$ 
with $f$  defined as follows.

We call a sequence of intervals (open or closed or half-open)
$I_n=(a_n,b_n) \subset (0,\i )$ 
an increasing cover of the half line if:
\begin{enumerate} 
 \item  $\displaystyle \lim_{n \to -\infty} a_n =0$
 \item  $\displaystyle \lim_{n \to +\infty} a_n =+\infty$
\item $a_n\leq b_{n-1}<a_{n+1}\leq b_n$.
\end{enumerate}

\begin{prop}\cite{EFK} \label{colimacon} Let
  $(\omega_{0,1},\omega_{0,2},\omega_{0,3}) \in \R^3$ be fixed. 
For every $\eps>0$ and every $s\in \N$, there exist an increasing
cover $(I_n)$ of $(0,\i)$ and functions $f_i \in C^\infty(\R,[0,1])$,
$i=1,2,3$, such that $\|f_i\|_s<\eps$ and $ f_i(0)=0$,  and
\begin{itemize} 

\item For each  $n\in \Z$, the functions $f_1$ and $f_2$ are  constant on $I_{3n}$ :  
$${f_1}_{| I_{3n}}\equiv \bar{f}_{1,n}, \quad {f_2}_{| I_{3n}}\equiv \bar{f}_{2,n}$$ 
\item For each  $n\in \Z$, the functions $f_1$ and $f_3$ are  constant on $I_{3n+1}$ :
$${f_1}_{| I_{3n+1}}\equiv \bar{f}_{1,n}, \quad {f_3}_{| I_{3n+1}}\equiv \bar{f}_{3,n}$$ 
\item For each  $n\in \Z$, the functions $f_2$ and $f_3$ are  constant on $I_{3n-1}$ :
$${f_2}_{| I_{3n-1}}\equiv \bar{f}_{2,n}, \quad {f_3}_{| I_{3n-1}}\equiv \bar{f}_{3,n-1}$$ 
\item The vectors $(\bar{f}_{1,n}+\omega_{0,1},\bar{f}_{2,n}+\omega_{0,2})$, $(\bar{f}_{1,n}+\omega_{0,1},\bar{f}_{3,n}+\omega_{0,3})$ and  $(\bar{f}_{2,n}+\omega_{0,2},\bar{f}_{3,n}+\omega_{0,3})$ are Liouville.
\end{itemize}

If $\o_{0,1} \o_{0,2}<0$, we ask that $\eps>0$ be sufficiently small so that $\o_1 (r_4)\o_2(r_4)<0$ for every $r_4$. 
\end{prop}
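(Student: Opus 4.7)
The plan is to construct the constants $\bar f_{i,n}$ inductively, exploiting the Baire-genericity of Liouville pairs, and then define $f_i$ by smoothly interpolating between these constants on well-chosen transition intervals. First, I will choose an increasing cover $(I_n)$ of $(0,\infty)$ with prescribed lengths $\ell_n=|I_n|$ designed so that $\ell_n\to 0$ exponentially as $n\to-\infty$ (so that $a_n\to 0$) and $\ell_n$ is bounded below as $n\to+\infty$. The intervals serve two roles: on the ``constancy'' intervals specified in the statement the $f_i$'s equal constants; on the ``transition'' intervals (where each $f_i$ is allowed to vary: $f_1$ on intervals $I_{3n-1}$, $f_2$ on $I_{3n+1}$, $f_3$ on $I_{3n}$) we smoothly interpolate between consecutive constant values.

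The central ingredient is that the set of Liouville vectors in $\R^2$ is a dense $G_\delta$, and hence so is the set of triples $(x_1,x_2,x_3)\in\R^3$ for which the three pairs
\[
(x_1+\omega_{0,1},x_2+\omega_{0,2}),\quad (x_1+\omega_{0,1},x_3+\omega_{0,3}),\quad (x_2+\omega_{0,2},x_3+\omega_{0,3})
\]
are simultaneously Liouville (the intersection of three dense $G_\delta$ subsets of $\R^3$). Consequently, in any non-empty open ball of $\R^3$ one can find such a triple. I will use this to choose the constants $(\bar f_{1,n},\bar f_{2,n},\bar f_{3,n})\in (0,1)^3$ inductively, starting from $n=0$ and proceeding in both directions, with the prescription that the triple at step $n+1$ is picked in an arbitrarily small ball around the triple at step $n$ for $n\geq 0$, and in an arbitrarily small ball around $0$ for $n$ very negative (so that $\bar f_{i,n}\to 0$ as $n\to-\infty$, forcing $f_i(0)=0$).

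Once the constants are fixed, on each transition interval of length $\ell_n$ one connects two consecutive constant values $a,b$ (with $|b-a|$ chosen at the selection step) by a standard smooth cutoff. The $C^s$-norm of such an interpolant is bounded by $C_s|b-a|\ell_n^{-s}$; at the selection step for $\bar f_{i,n+1}$, I will impose the quantitative smallness
\[
|\bar f_{i,n+1}-\bar f_{i,n}| \;\leq\; \frac{\varepsilon}{C_s 2^{|n|}}\,\ell_n^{s},
\]
which is compatible with the open condition of being Liouville by the density statement above. This controls $\|f_i\|_s<\varepsilon$ by summing a geometric series. The hypothesis $0<\bar f_{i,n}<1$ is preserved by taking the initial radii and all subsequent corrections small enough.

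The main obstacle is the simultaneous management of two conflicting constraints: the Liouville condition is a residual, hence delicate, property that forbids any prescribed algebraic relation between the chosen constants, while the smoothness bound forces the jumps $|\bar f_{i,n+1}-\bar f_{i,n}|$ to be very small. Baire-genericity of Liouville triples inside any open ball resolves this tension, because it allows one to keep the size of the allowed perturbation at step $n$ as small as desired while still hitting the open dense set of admissible triples. Finally, the extra requirement $\omega_1(r_4)\omega_2(r_4)<0$ in the case $\omega_{0,1}\omega_{0,2}<0$ is automatic provided $\varepsilon<\min(|\omega_{0,1}|,|\omega_{0,2}|)$, since then $\omega_i(r_4)=\omega_{0,i}+f_i(r_4)$ has the same sign as $\omega_{0,i}$ for every $r_4$, and the continuity of $f$ together with $f_i(0)=0$ ensures $f$ extends smoothly to $r_4=0$.
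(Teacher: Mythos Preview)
The paper does not prove this proposition; it is quoted from \cite{EFK} and used as a black box. So there is no ``paper's own proof'' to compare with, and your outline is essentially the standard way such a result is obtained: exploit that Liouville vectors form a dense $G_\delta$ in $\R^2$, choose the plateau values $\bar f_{i,n}$ inductively inside shrinking balls, and interpolate with a fixed cutoff profile rescaled to the transition intervals.

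Two points deserve tightening. First, the constants are not organized as independent triples indexed by $n$: the pair that is constant on $I_{3n+2}=I_{3(n+1)-1}$ is $(\bar f_{2,n+1},\bar f_{3,n})$, so the Liouville condition there couples step $n$ to step $n+1$. Your induction should therefore update one constant at a time (in the order dictated by the snail pattern $\ldots,I_{3n-1},I_{3n},I_{3n+1},\ldots$), choosing each new value in the dense $G_\delta$ of Liouville partners of the already-fixed coordinate, rather than picking a whole triple at once.

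Second, and more substantively, the bound $|\bar f_{i,n+1}-\bar f_{i,n}|\le \varepsilon C_s^{-1}2^{-|n|}\ell_n^{\,s}$ only controls derivatives up to order $s$; with $\ell_n\to 0$ as $n\to-\infty$ the $k$-th derivative of your interpolant behaves like $|\Delta|\,\ell_n^{-k}$ and will blow up for $k>s$. But the statement asks for $f_i\in C^\infty(\R)$ (and the subsequent Remark explicitly uses that $f_i$ is $C^\infty$-flat at $0$). The fix is easy: at step $n$ (for $n$ negative) demand instead
\[
|\bar f_{i,n-1}-\bar f_{i,n}|\;\le\;\varepsilon\,C_{|n|+s}^{-1}\,2^{-|n|}\,\ell_n^{\,|n|+s},
\]
so that the order of derivatives under control increases as $r_4\to 0^+$; this yields flatness at $0$ and hence a genuine $C^\infty$ extension by $0$ to $r_4\le 0$. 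With these two adjustments your plan is correct.
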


\begin{Rem} {\rm 
It follows that  $f_1,f_2,f_3$ are $\cC^{\i}$-flat at zero.
} \end{Rem}

Notice that, as a consequence of Proposition \ref{colimacon}, for
$r_4\in I_n$ two of the coordinates of
$(f_1(r_1)+\o_1,f_2(r_1)+\o_2,f_3(r_1)+\o_3)$ are constant and form a Liouville
vector. This is why we will be able to use a similar construction as in the two dimensional Liouville case.

Let $\cU_4$ be the set of exact symplectic diffeomorphisms of $\R^8$
with the following properties: $U(r,\th)=(r,\th)$ in the neighborhood of the axes $\{r_i=0\}$,
$i=1,\dots ,4$, as well as for $|r|$  sufficiently large,  and 
$U(r,\th)=(R,\Theta)$ satisfies $R_4=r_4$. 
Let $\cH_4$ be the set of Hamiltonians of the form $H_0^4\circ U$, 
$U\in \cU_4$. Finally we denote $\bar{\cH_4}$ the closure in the
$C^\infty$ topology of $\cH_4$.

We denote 
$$
\widetilde{I_n}=\R^3\times I_n \times  \T^4.
$$
For $H \in \cH_4$ the flow $\Phi_H^t$ leaves $r_4$ invariant.
In particular,  for $U\in \cU_4$ we have
$U(\widetilde{I_n})=\widetilde{I_n}$ 
for any $n \in \Z$.   We
shall show how to make arbitrarily small perturbations
of $H_0^4$   
inside $\cH_4$
that create oscillations of the
corresponding flow in two of the three directions $r_1,r_2,r_3$.
These perturbations will actually be compositions inside $\cH_4$ by
exact symplectic maps obtained from  suitably chosen generating
functions.

Iterating the argument gives a construction by successive conjugations
scheme similar to \cite{AK}. The difference here is that the
conjugations will be applied in a "diagonal" procedure to include more
and more intervals $I_n$ into the scheme. Rather than following this
diagonal scheme which would allow to define the conjugations
explicitly at each step, we will actually adopt a $\GG^\delta$-type
construction (see \cite{FH}) that makes the proof much shorter and
gives slightly more general results.

\begin{theo} \label{theo.gdelta} Let $H_0^4(r)=\l{\omega (r_4), r}$ be
as in (\ref{def_H4}). Let $\cD$ be the set of Hamiltonians $H \in \bar{\cH_4}$ such that 
for each $p \in \R^8$ such that $r_i(p)\neq 0$ for every
$i=1,\ldots,4$,  there exists $i \in \{1,2,3\}$ 
such that ${\cO^\infty_H(p)}$ 
accumulates on at least one of the sets $\{r_{i}=\infty\}$ and 
$\{r_{i}=0\}$.

If $\o_{0,1} \o_{0,2}<0$, we assume moreover that for $H \in \cD$, there exists for every $A>0$, $p'\in \R^8$ such that
$\cO^\infty_H(p')$ intersects both  $\{|r|\leq A^{-1} \}$ and 
$\{|r|\geq A\}$. 

Then $\cD$ contains  a dense (in the $C^\infty$ topology) $\GG^\delta$ subset of $\bar{\cH_4}$. 
\end{theo}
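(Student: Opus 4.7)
I would prove Theorem \ref{theo.gdelta} by a Baire-category argument in the spirit of \cite{FH}, realizing $\cD$ as containing a countable intersection of open dense sets in $\bar{\cH_4}$, with density established by Anosov--Katok-type shear perturbations on each slice $\widetilde{I_n}$. For each $n\in\Z$, Proposition \ref{colimacon} designates a pair $(i_1(n),i_2(n))\subset\{1,2,3\}$ indexing the two constant, Liouville components of $\omega(r_4)$ on $I_n$. I fix a countable exhaustion $(K_{n,j})_{j\in\N}$ of $\widetilde{I_n}\cap\{r_1 r_2 r_3 r_4\neq 0\}$ by compact sets and, for each $M\in\N$, the escape neighborhood
$$
W_{n,M}=\{r_{i_1(n)}<1/M\}\cup\{r_{i_1(n)}>M\}\cup\{r_{i_2(n)}<1/M\}\cup\{r_{i_2(n)}>M\}.
$$
Put $O_{n,j,M}=\{H\in\bar{\cH_4}\ :\ K_{n,j}\subset\bigcup_{t\in\R}\Phi_H^{-t}(W_{n,M})\}$. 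Any $H\in\bigcap_{n,j,M} O_{n,j,M}$ lies in $\cD$, since entering $W_{n,M}$ for every $M$ forces the orbit to accumulate on at least one of the four boundary strata $\{r_{i_j(n)}=0\}$, $\{r_{i_j(n)}=\infty\}$ ($j=1,2$). Openness of $O_{n,j,M}$ is immediate from compactness of $K_{n,j}$ and continuity of the time-$t$ map in $H$ on bounded time intervals: a finite subcover $K_{n,j}\subset \bigcup_\ell \Phi_H^{-t_\ell}(W_{n,M})$ persists under $C^\infty$-small perturbations of $H$.

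The main step is density. Since $\cH_4$ is dense in $\bar{\cH_4}$ by definition, I may approximate any $H=H_0^4\circ U$ with $U\in\cU_4$. The goal is to find $V\in\cU_4$, $C^\infty$-close to the identity, supported in the interior of $\widetilde{I_n}$, such that $H\circ V\in O_{n,j,M}$: equivalently, every orbit of $\Phi_{H\circ V}$ starting in $K_{n,j}$ must eventually visit $W_{n,M}$. On $\widetilde{I_n}$ the integrable flow $\Phi_{H_0^4}$ rotates $(\theta_{i_1(n)},\theta_{i_2(n)})$ at the constant Liouville frequency $\alpha_n=(\bar{f}_{i_1,n}+\omega_{0,i_1},\bar{f}_{i_2,n}+\omega_{0,i_2})$, so there is a sequence of return times $T_s\to\infty$ along which $T_s\alpha_n$ approaches $2\pi\Z^2$ faster than any polynomial rate in $T_s$. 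Choosing such a $T_s$, I would take $V=\exp(X_\chi)$ where $\chi$ is a trigonometric polynomial in $(\theta_{i_1},\theta_{i_2})$ whose coefficients are smooth functions of $(r_{i_1},r_{i_2},r_4)$ flatly supported in the interior of $\widetilde{I_n}$ and away from $\{r_{i_1}r_{i_2}=0\}$, tuned so that the resonant term accumulates coherently over time $T_s$ into a drift in $(r_{i_1},r_{i_2})$ of macroscopic size, uniformly in initial conditions in a neighborhood of $U(K_{n,j})$, carrying the whole compact set into $W_{n,M}$. Since $\chi$ depends neither on $\theta_{i_3}$ nor on $\theta_4$, the map $V$ preserves $r_4$ and hence lies in $\cU_4$.

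The main obstacle is the quantitative balance between $C^\infty$-smallness of $\chi$ and macroscopic size of the drift over a single Liouville return $T_s$. This is where Proposition \ref{colimacon} is essential: its freedom in choosing the constants $\bar{f}_{i,n}$ lets $\alpha_n$ be Liouville of a quality matched to the target triple $(n,j,M)$ in the Baire enumeration and to any prescribed Sobolev order in which $\chi$ is to be made small. Secondary technical points include the $C^\infty$-flatness of $\chi$ at $\{r_i=0\}$ (so that $V\in\cU_4$) and at $\partial I_n$ (so that distinct slices remain dynamically isolated and the full composition $H_0^4\circ U\circ V$ stays smooth), both handled by standard flat cut-offs. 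The diffusion claim when $\omega_{0,1}\omega_{0,2}<0$ is obtained by adjoining a parallel countable family of open dense conditions forcing orbits to enter both $\{|r|\leq A^{-1}\}$ and $\{r_1\geq A\}\cap\{r_2\geq A\}$; the opposite signs of $\omega_{0,1}$ and $\omega_{0,2}$ allow a single shear to produce matching-sign drifts in $r_1$ and $r_2$, exactly as in the two-degree-of-freedom argument behind Theorem \ref{theo.gdelta2}.
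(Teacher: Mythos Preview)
Your Baire-category skeleton (openness via a finite-time cover of the compact $K_{n,j}$, and $\cD\supset\bigcap O_{n,j,M}$) is correct and matches the paper's framework. The density step, however, has a genuine gap. You assert that the new conjugacy $V=\exp(X_\chi)$ is ``$C^\infty$-close to the identity'' and that escape is produced by a resonant drift that ``accumulates coherently over time $T_s$''. But $H\circ V=H_0^4\circ(UV)$ is still \emph{conjugate} to the integrable $H_0^4$: every orbit of $H\circ V$ lies on an invariant torus $(UV)^{-1}(\{r=\text{const}\})$, and its total $r$-excursion is bounded by the geometric stretching of that torus, independently of any flow time $T_s$. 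If $V$ is $C^\infty$-close to the identity then these tori are uniformly close to the tori $U^{-1}(\{r=\text{const}\})$ of the unperturbed $H$, and no orbit that did not already reach $W_{n,M}$ will do so now. The Liouville return time plays no role in this picture; you are implicitly reasoning as if $\chi$ were an \emph{additive} perturbation of $H$ rather than a conjugation.

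What is actually required is a conjugacy that is \emph{far} from the identity in $C^0$ (so that it stretches each flat torus across the margin $W_{n,M}$) while $H_0^4\circ V^{-1}$ remains $C^s$-close to $H_0^4$. A single shear cannot do this: the diffeomorphism condition on the generating function forces each shear to move $r$ by only a fixed small amount $b$. The paper (Lemmas~\ref{lem_u} and~\ref{lemma_U}, then Proposition~\ref{main.liouville}) therefore takes a composition $u_1\circ\cdots\circ u_N$ of shears at rapidly increasing frequencies $|q_j|>|q_{j-1}|^3$; each $u_j$ moves $r$ by $O(b)$, but the $N$-fold composition moves $r$ by a factor $(1\pm b\Omega/(4A))^N$, with $N$ chosen so that this exceeds $A^2$. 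The Liouville property of $\alpha_n$ is used at each step to choose $q_j$ with $\langle q_j,\alpha_n\rangle$ so small that $\|(H_0^4\circ u_j^{-1}-H_0^4)\circ u_{j-1}^{-1}\circ\cdots\circ u_1^{-1}\|_s<2^{-j}\eps$, making the total error summable. (A minor side remark: the vectors $\alpha_n$ are fixed once $H_0^4$ is chosen and cannot be re-tuned per triple $(n,j,M)$; what one adapts is the sequence $q_1,\dots,q_N$ along which the fixed Liouville $\alpha_n$ is well approximated.)
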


\begin{proof}[Proof that Theorem \ref{theo.gdelta} implies Theorem
  \ref{theorem.main4}]  Note that for $H \in \cD$ and $i=1,\dots ,4$, 
the set $\{r_i=0\}$ is foliated by invariant tori of dimension $3$ on which the dynamics is that of the integrable Hamiltonian $H_0^4$. We want to show that these are the only invariant tori of $H$.  Indeed, let $p \in \R^8$ such that $r_i(p)\neq 0$ for every
$i=1,\ldots,4$. Since the orbit of $p$ accumulates on the axis or at infinity, it cannot lie on an invariant compact set.

Note now that if  $\o_0$ does not have all its coordinates of the same sign, we
can assume that $\o_{0,1}\o_{0,2}<0$ by possibly renaming the variables. 
Hence the second part of Theorem \ref{theorem.main4} follows form the second part of Theorem  \ref{theo.gdelta}.
\end{proof}

\subsection{Three degrees of freedom}

The construction of Theorem \ref{theorem.main3} for $d=3$ will be
similar to the case $d=4$ but with this difference that we cannot
count anymore on an invariant action variable $r_4$ that played the
role of a parameter. Instead, one of the action coordinates will both
involved in the diffusion, and play the role of the parameter. 
We choose this variable to be $r_3$ and assume without loss of
generality that if the coordinates of $\o_0$ are not all of the same
sign then $\o_{0,1}\o_{0,2}<0$.

We   fix a sequence of intervals $I_n=[a_{n-1},a_{n}] \subset
(0,\infty)$, $n\in \Z$, such that  
$\displaystyle \lim_{n \to -\infty} a_n =0$,
and  $\displaystyle \lim_{n \to +\infty} a_n =+\infty$,
and let $\widetilde I_n= \R_+^2\times I_n \times \T^3$.
We introduce a
completely integrable flow with a fixed point at the origin by
\begin{equation} \label{H03} 
H_0^3(r)= \l{\omega(r_3) , r} =   \l{\omega_0+f(r_3),r}, 
\end{equation}
where 
$f=(f_1,f_2,f_3)$ is as in Proposition \ref{colimacon} with $r_4$
replaced by $r_3$, and  we use action-angle coordinates as above.

Let $\cU_3$ be the set of
symplectomorphisms $U$ such that $U(r,\theta)=(r,\theta)$  near 
the axes $\{r_i=0\}$, $i=1,2,3$,  as well as near the sets
$\{r_3 = a_n\}$, $n\in\Z$, and for $|r|$ sufficiently large.
Consider the class of conjugates of $H_0^3$ 
by the elements of $\cU_3$, and denote its $C^\infty$-closure by $\bar{ \cH_3}$.

\begin{theo} \label{theo.gdelta_d3} Let $H_0^3(r)=\l{\omega (r_3), r}$ be
  as in \eqref{H03}. Let 
$\cD$ be the set of Hamiltonians $H \in \bar{\cH_3}$ such that 
for each $p \in \R^6$ satisfying $r_i(p)\neq 0$ for every
$i=1,2,3$ and  $r_3(p) \notin \{a_n\}_{n\in \Z}$ 
we have  that 
${\cO^\infty_H(p)}$ accumulates 
on at least one of the following
sets: 
$\{r_{1}=\infty\}$, $\{r_{2}=\infty\}$,
$\{r_{1}=0\}$, $\{r_{2}=0\}$, $\cup_{n \in \Z} \{r_3=a_n \}$.
 
 If $\o_{0,1} \o_{0,2}<0$, then we assume moreover that for $H \in \cD$, there exists for every $A>0$, $p'\in \R^6$ such that
$\cO^\infty_H(p')$ intersects both  $\{|r|\leq A^{-1} \}$ and
$\{r_{1}\geq A \} \cap \{r_{2}\geq A \}$.

Then $\cD$ contains  a dense (in the $C^\infty$ topology) $\GG^\delta$ subset of $\bar{\cH_3}$ 
\end{theo}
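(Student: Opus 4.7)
The plan is to follow verbatim the scheme used for the four-degrees-of-freedom Theorem \ref{theo.gdelta}, with $r_3$ now playing two roles at once: it is one of the three diffusing action coordinates, and it is the parameter which, via Proposition \ref{colimacon}, selects on each interval $I_n$ a pair of coordinates of $\omega(r_3)$ that is constant and Liouville on $I_n$. The flatness condition built into the definition of $\cU_3$ near $\{r_3=a_n\}$ guarantees that every $H\in\bar{\cH_3}$ leaves each slab $\widetilde{I_n}=\R_+^2\times I_n\times\T^3$ invariant, so the perturbation problem decouples in $n\in\Z$, and within each slab the dynamics is essentially a two-degrees-of-freedom Liouville problem on the pair of actions $(r_{i(n)},r_{j(n)})$ selected by Proposition \ref{colimacon}.

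For the Baire framework I would fix a countable exhaustion of the admissible phase space $\R^6\setminus(\{r_1 r_2 r_3=0\}\cup\bigcup_n\{r_3=a_n\})$ by compact sets $K_j$, a countable basis of open target sets drawn from $\{r_i<1/m\}$ and $\{r_i>m\}$ for $i=1,2$ and $\{|r_3-a_n|<1/m\}$ for $|n|\le m$, and integers $T$. For each admissible choice of two such targets $U_1,U_2$ and each triple $(j,m,T)$, define $\cD_{j,m,T,U_1,U_2}$ to be the set of $H\in\bar{\cH_3}$ such that every $p\in K_j$ satisfies $\Phi_H^{t_1}(p)\in U_1$ and $\Phi_H^{t_2}(p)\in U_2$ for some $|t_1|,|t_2|\le T$. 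Continuous dependence of the flow on $H$, uniform on $|t|\le T$ and on $p\in K_j$, makes each such set $C^\infty$-open in $\bar{\cH_3}$; the set $\cD$ of the theorem contains the countable intersection of appropriately chosen such sets (together with an analogous countable family handling the extra diffusion condition when $\omega_{0,1}\omega_{0,2}<0$), so by the Baire property of the closed subset $\bar{\cH_3}$ it is enough to establish density of each.

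Density is the only substantive step. Given $H=H_0^3\circ U\in\cH_3$ (which is dense in $\bar{\cH_3}$) and $\delta>0$, let $n_1,\dots,n_N$ be the indices of the slabs met by $U^{-1}(K_j)$. On each $I_{n_\ell}$ Proposition \ref{colimacon} provides a constant Liouville pair $(\omega_{0,i}+\bar f_{i,n_\ell},\omega_{0,j}+\bar f_{j,n_\ell})$; choosing a rapid rational approximation $p/q$ of this pair I would construct, following the template of the two-dimensional Liouville argument behind Theorem \ref{theo.gdelta2}, a generating function $S_\ell$ supported in $\widetilde{I_{n_\ell}}$, flat near $\{r_3=a_{n_\ell-1}\}$, $\{r_3=a_{n_\ell}\}$ and near the axes $\{r_i=0\}$, whose $C^s$-norm is of order $\varepsilon$ and whose associated exact symplectic map $V_\ell\in\cU_3$ produces, after $q$ iterations of $\Phi_H$, a displacement of order $\varepsilon q$ in the directions $r_i,r_j$. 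Setting $H'=H_0^3\circ U\circ V_1\circ\cdots\circ V_N$ puts $H'$ in $\cD_{j,m,T,U_1,U_2}$ for $T$ of order $\max_\ell q_\ell$, while $\|H'-H\|_{C^s}<\delta$. For $\omega_{0,1}\omega_{0,2}<0$, the same construction applied in a single slab where the relevant Liouville pair is $(\omega_1,\omega_2)$ produces a drift along an energy level of $H_0^3\circ U$ that visits both $\{|r|<A^{-1}\}$ and $\{r_1,r_2>A^{-1}\}$. The main obstacle, as in \cite{EFK} and the $d=4$ case, is to simultaneously enforce the flatness requirements of $\cU_3$, keep $S_\ell$ small in $C^s$, and produce an order-one displacement in the actions within a controlled time; the Liouville character of the selected pair, namely the availability of $p/q$ with $\varepsilon q\gg 1$ while $\varepsilon\ll 1$ in $C^s$, is precisely what makes these three conditions compatible.
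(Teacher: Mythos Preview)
Your overall strategy is the paper's: a Baire argument in $\bar{\cH_3}$, with density supplied slab by slab via the two-dimensional Liouville construction, and with $r_3$ serving simultaneously as the parameter that (through Proposition~\ref{colimacon}) selects a constant Liouville pair on each $I_n$ and, on slabs $I_{3m\pm1}$, as one of the two diffusing actions. The paper packages the density step as Proposition~\ref{lem_Ud3} (the $d=3$ analogue of Proposition~\ref{main.liouville}) and then reproduces verbatim the $G_\delta$ argument from $d=4$, with the margin set $M_3(A)$ in place of $M_4(A)$.

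There are, however, two concrete gaps in your execution.

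\textbf{The Baire framework is set up incorrectly.} Your sets $\cD_{j,m,T,U_1,U_2}$ require \emph{every} $p\in K_j$ to reach \emph{both} of two fixed targets $U_1,U_2$. This is not achievable by the perturbations at hand: the construction drives each orbit toward the margin components cut by its own energy line $E_p$ (see Lemma~\ref{lemma_U}\,(3)--(4)), and different points of a single compact $K_j$ sit on different energy lines and hence head for different margin components. Consequently $\bigcup_T\cD_{j,m,T,U_1,U_2}$ is in general \emph{not} dense in $\bar{\cH_3}$, and your countable intersection is empty rather than residual. The paper avoids this by taking the target to be the full union
\[
M_3(A)=\bigcup_{i=1}^3\{r_i>A\}\cup\{r_1<A^{-1}\}\cup\{r_2<A^{-1}\}\cup\bigcup_{n\in\Z}\{r_3\in I_n\setminus I_n(A)\}
\]
and asking only that each orbit from $\widetilde I^3_n(A)$ meet $M_3(A)$ within time $T$.

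\textbf{The displacement mechanism is misdescribed.} You write that a single generating function $S_\ell$ produces, ``after $q$ iterations of $\Phi_H$, a displacement of order $\varepsilon q$''. But the Hamiltonians you build, $H'=H_0^3\circ(U\circ V_1\circ\cdots\circ V_N)^{-1}$, are \emph{conjugate} to $H_0^3$ and hence completely integrable; their orbits are quasi-periodic on $(VU)$-images of flat tori and do not drift in time. The mechanism in the paper is different: the conjugation $U$ is itself a composition of $N$ elementary maps $u_j$ (not a single one), chosen so that the cumulative multiplicative stretching $(1\pm b\Omega/(4A))^N$ pushes every invariant torus out into $M_3(A)$; cf.\ Lemmas~\ref{lem_u} and~\ref{lemma_U}. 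The Liouville hypothesis enters to make each $u_j$ arbitrarily $C^s$-small while carrying a large oscillation frequency $|q_j|$, not to accumulate a time-$q$ drift. Finite-time visiting of the margin then comes simply from density of the orbit on its (now stretched) invariant torus.
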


\begin{proof}[Proof that Theorem \ref{theo.gdelta_d3} implies Theorem
  \ref{theorem.main3}]  Note that the  axes and the sets
  $\{r_3=a_n\}$ 
are foliated by invariant tori.  The orbit of $p  \in \R^6$ satisfying $r_i(p)\neq 0$ for every
$i=1,2,3$ and  $r_3(p) \notin \{a_n\}_{n\in \Z}$  cannot accumulate on
any of these sets if it lies on an invariant torus. Hence the only invariant tori for $\Phi^t_H$ are those foliating the axis and $\cup_{n \in \Z} \{r_3=a_n \}$.
The second part of Theorem \ref{theorem.main3} follows clearly from the second part of  Theorem \ref{theo.gdelta_d3}. 
\end{proof}

\section{Proof for the case $d=2$}

All our constructions will be derived from the main building block  with 
two dimensional Liouville frequencies.   The construction is
summarised in the following Proposition \ref{lemma.conjugacy} from
which Theorem \ref{theo.gdelta2} will easily follow.

For $A>0$, denote
$$
R(A):=[A^{-1}, A] \times [A^{-1}, A], \quad \widetilde R(A)
=R(A)\times \T^2.
$$
We define the "margins" by: 
$$
M(A)=\{r_1>A\} \cup \{r_2>A\} \cup \{r_1<A^{-1}\} \cup \{r_2<A^{-1}\}.
$$
We shall refer to the individual sets of the above union as {\it margin sets}.
\begin{prop}\label{lemma.conjugacy}
For any  Liouville vector $  \om=(\om_1,\om_2)$, 
  any $\eps>0, s \in \N $, $A_0>0$, and any symplectic map $V$ that is identity 
outside $\widetilde R(A_0)$ we have that for any $A>A_0$ there exist
  $U\in \cU$ and $T>0$ with the following properties for $H=H_0^2\circ U^{-1} \circ V^{-1}$:
\begin{enumerate}
\item $U=$Id in the complement of $\widetilde R(2A)$,

\item ${\|H-H_0^2\circ V^{-1} \|}_s<\eps$,

\item  For any $P \in \widetilde R(A)$ we have:  $\cO^T_H(P)$ intersects $M(A)$.

\item Moreover, if  $\o_1 \o_2 <0$, 
then there exists  $p' \in \R^4$ 
such that $\cO^T_H(p')$ intersects both $\cap_{i=1}^2
\{r_{i}<2A^{-1}\}$ and $\cup_{i=1}^2 \{r_{i}>A\}$.
\end{enumerate} 
\end{prop}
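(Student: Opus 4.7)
The strategy is the classical conjugation trick at Liouville frequencies: exploit the extreme smallness of $|\langle \omega, k\rangle|$ for a well-chosen $k\in\Z^2$ in order to afford a symplectic shear $U$ in the $r$-direction of amplitude $\sim A$ while keeping the conjugated Hamiltonian $C^s$-close to the original. When pulled back by the linear flow $\Phi^t_{H_0^2}$, such a shear produces oscillations of the $r$-coordinate of amplitude $\sim A$, which push every orbit starting in $\widetilde R(A)$ into the margin $M(A)$ within a controlled time $T$ of order $1/|\langle \omega, k\rangle|$.

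Concretely, I would first invoke the Liouville hypothesis to choose $k\in\Z^2\setminus\{0\}$ with $|\langle \omega, k\rangle|$ arbitrarily small compared to any polynomial in $|k|$; set $\psi_0(\theta)=c_k\cos(2\pi\langle k, \theta\rangle)$ with $c_k$ selected so that $2\pi c_k|k_j|\geq 2A$ for some coordinate $j$; and pick a smooth cutoff $\chi\colon\R^2\to[0,1]$ equal to $1$ on $R(A)$, supported in $R(2A)$, and $C^\infty$-flat at the axes $\{r_i=0\}$. Define $U$ through the exact generating function $S(\theta, R)=\langle R, \theta\rangle + \chi(R)\psi_0(\theta)$; for $|k|$ large enough, the implicit function theorem yields a symplectomorphism $U\in\cU_2$ that equals the identity outside $\widetilde R(2A)$ and near the axes, giving~(1). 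For~(2) I write $H-H_0^2\circ V^{-1}=(H_0^2\circ U^{-1}-H_0^2)\circ V^{-1}$ and compute $H_0^2\circ U^{-1}(r,\theta)-H_0^2(r) = 2\pi c_k \langle\omega,k\rangle \chi(r)\sin(2\pi\langle k,\theta\rangle) + \text{(higher order)}$; the $C^s$-norm of this difference is $\lesssim c_k|\langle \omega,k\rangle||k|^s$, which is driven below $\epsilon$ by the Liouville choice of $k$. For~(3), set $T$ of order $1/|\langle \omega,k\rangle|$; for $P\in \widetilde R(A)$, the $r$-coordinate of $\Phi^t_H(P)=V\circ U\circ\Phi^t_{H_0^2}\circ U^{-1}\circ V^{-1}(P)$ equals $P_r+\nabla\psi_0(P_\theta+t\omega)-\nabla\psi_0(P_\theta)$ up to negligible corrections; as $t$ ranges over $[0,T]$ the argument $2\pi\langle k, P_\theta+t\omega\rangle$ sweeps an interval of length $\geq \pi$, so the sinusoidal difference attains magnitude $\geq 2\pi c_k|k_j|\geq 2A$ in the $j$-th coordinate for some $t\leq T$, forcing $\Phi^t_H(P)\in M(A)$.

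For~(4), note that when $\omega_1\omega_2<0$ and $|\langle\omega, k\rangle|$ is small, the components $k_1$ and $k_2$ necessarily share a common sign, so the shear direction $(k_1,k_2)$ moves $r_1$ and $r_2$ together; choosing $p'$ with actions near the centre of $R(A)$, the two extreme values of the sinusoidal displacement drive $(r_1,r_2)$ simultaneously below $2A^{-1}$ and simultaneously above $A$, placing both required visits on the single orbit $\cO^T_H(p')$. The main obstacle is the apparent tension between the shear amplitude $c_k \sim A/|k_j|$ required by~(3) and the $C^s$-smallness required by~(2): these can be reconciled only because $\omega$ is Liouville, through a Fourier mode $k$ on which $\omega$ is highly quasi-resonant. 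A secondary subtlety is ensuring that $U$ is a bona fide symplectomorphism of $\R^4$ equal to the identity near the axes, which is handled by the $C^\infty$-flat cutoff $\chi$ together with the fact that large $|k|$ makes the generating-function inversion governed by the implicit function theorem.
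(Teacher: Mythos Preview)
Your single–step additive shear does not yield a symplectomorphism of the phase space, and this is a genuine gap rather than a technicality. With your generating function $S(\theta,R)=\langle R,\theta\rangle+\chi(R)\psi_0(\theta)$ one gets
\[
r=R+\chi(R)\,\nabla_\theta\psi_0(\theta),\qquad \Theta=\theta+\nabla_R\chi(R)\,\psi_0(\theta).
\]
The amplitude requirement $2\pi c_k|k_j|\ge 2A$ forces $|\nabla_\theta\psi_0|$ to be of order $A$, \emph{independently of $|k|$}. Two things then fail. First, invertibility: the mixed Hessian $\partial^2 S/\partial\theta\partial R=I+(\nabla_\theta\psi_0)(\nabla_R\chi)^{\!T}$ has determinant $1+\langle\nabla_R\chi,\nabla_\theta\psi_0\rangle$, and since $|\nabla_R\chi|\sim 2A$ on the inner transition strip $\{(2A)^{-1}\le r_i\le A^{-1}\}$ this determinant ranges over an interval containing $0$; large $|k|$ does not help here. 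Second, and more fundamentally, even where $\chi\equiv 1$ the relation $r=R+\nabla_\theta\psi_0(\theta)$ with $|\nabla_\theta\psi_0|\sim A$ sends points $R\in R(A)$ with small actions (say $R_j\sim A^{-1}$) to $r$ with negative components. So $U$ cannot be a diffeomorphism of $(\R_+)^2\times\T^2$ (equivalently of $\R^4$), let alone lie in $\cU_2$. The tension you identify between ``amplitude $\sim A$'' and ``$C^s$-smallness'' is real, but it is not the only obstacle: an \emph{additive} drift of order $A$ near an elliptic fixed point is simply incompatible with positivity of the actions.

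The paper resolves this by making the stretch \emph{multiplicative} and iterating. Each building block $u_j$ has generating function $\langle r,\Theta\rangle+\frac{b}{2\pi q_1}r_1r_2\,g(r_1)g(r_2)\sin(2\pi\langle q,\Theta\rangle)$ with a \emph{fixed small} $b=(10A)^{-3}$, giving $R_1=r_1\bigl(1+br_2\,g(r_1)g(r_2)\cos(\cdots)\bigr)$; this preserves $R_i>0$ automatically and is a genuine diffeomorphism. One then composes $U=u_1\circ\cdots\circ u_N$ with rapidly growing Liouville frequencies $|q_j|>|q_{j-1}|^3$, choosing $N$ so that $(1\pm b\Omega/(4A))^{\pm N}$ beats $A^{\pm 2}$. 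The Liouville condition is used $N$ times, once per $u_j$, to make $\|(H_0^2\circ u_j^{-1}-H_0^2)\circ u_{j-1}^{-1}\circ\cdots\circ V^{-1}\|_s<2^{-j}\eps$. Your intuition for~(2) (the difference is governed by $\langle\omega,q\rangle$) and for~(4) (when $\omega_1\omega_2<0$ the resonant direction has $k_1k_2>0$, so the drift moves $r_1,r_2$ together along the energy line) is correct and is exactly what the paper exploits, but only after replacing the single large additive shear by a composition of many small multiplicative ones.
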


\begin{proof}[Proof of Theorem \ref{theo.gdelta2}]  For $n,A,T \in \N^*$, let 
$$
\cD(A,T) := \left\{  H \in  \bar{\cH_2}  \mid 
\forall P\in  \widetilde R(A),  
  \cO^T_H(P)   \text{ intersects }  M(A)
\right\}  .
$$
It is clear that $\cD(A,T)$ are open subsets of  
$\bar{\cH_2}  $ in any $C^s$ topology. Proposition \ref{lemma.conjugacy} (1)--(3) implies that $ \cup_{T \in \N^*}   \cD(A,T)$
is dense in  $ \bar{\cH_2}  $ in any $C^s$ topology.
Hence the following set $\bar{\cD} \subset \cD$ is a dense $G^\delta$ set (in any $C^s$ topology) in $\bar{\cH}_2$ 
$$
\bar{\cD} = \bigcap_{A \in \N^*} \bigcup_{T \in \N^*}  \cD(A,T).
$$

In case $\o_{1}\o_{2}<0$ we just have to add to the definition of $\cD(A,T)$  the existence of a point 
 $p' \in \R^4$ 
such that $\cO^T_H(p')$ intersects both $\cap_{i=1}^2
\{r_{i}<2A^{-1}\}$ and $\cup_{i=1}^2 \{r_{i}>A\}$. The density of  $
\cup_{T \in \N^*}   \cD(A,T)$ then follows from (1)--(4) of
Proposition \ref{lemma.conjugacy}. \end{proof}

The rest of this section is devoted to the proof of Proposition
\ref{lemma.conjugacy}. The idea is to construct a conjugacy $U$ that
"wiggles" the invariant tori of $H_0^2$ and makes  them accumulate on
the margin sets. 
Since $V$ in the statement of the proposition is assumed to be
identity outside $\widetilde{R}(A)$ it will be possible to conclude
from there that the Hamiltonian $H=H_0^2 \circ U^{-1} \circ V^{-1}$
satisfies the requirements of the proposition.

One has to observe however that since we want $H_0^2 \circ U^{-1}$ to be very close to $H_0^2$, the "wiggling" will take place almost inside the energy levels of $H_0^2$. In particular, one does not get diffusion in the case $\omega_1 \omega_2>0$ because in that case the energy lines are compact segments (see Figure 1). To be more precise,  fix $\om=(\om_1,\om_2)$ and define the energy line
$$
E_{p}:= \{(r_1,r_2) \in \R^2_+:  
\omega_1 r_1+\omega_2 r_2=\omega_1 r_1(p)+\omega_2 r_2(p)\}, \quad
\widetilde E_p=E_p\times \T^2.
$$
Clearly, $\widetilde E_p$ has the form $\{ H_0^2=const.\}$, and 
is invariant under the flow of $H_0^2$ (with the same
fixed $\om$). For $p=(r,\th)$, let 
$$
\cT(p)=\{r\}\times \T^2
$$ 
denote the flat torus passing through $p$. This is the invariant torus
of $H_0^2$ passing through $p$. Let $H=H_0^2\circ U^{-1}$ for a
symplectic transformation $U$.  
Then $ U (\cT (p))$ is the invariant torus of $H$ passing through the
point $U(p)$. The main ingredient in the proof of Proposition \ref{lemma.conjugacy} is the following lemma in which we construct a symplectic map $U$
such that $ U (\cT (p))$ will "wiggle" inside $\widetilde E_p$ essentially for a large set of
starting points $p$.

\begin{figure}[htb] 
\centering
\resizebox{!}{8cm}{ 
\includegraphics[width=0.5\linewidth]{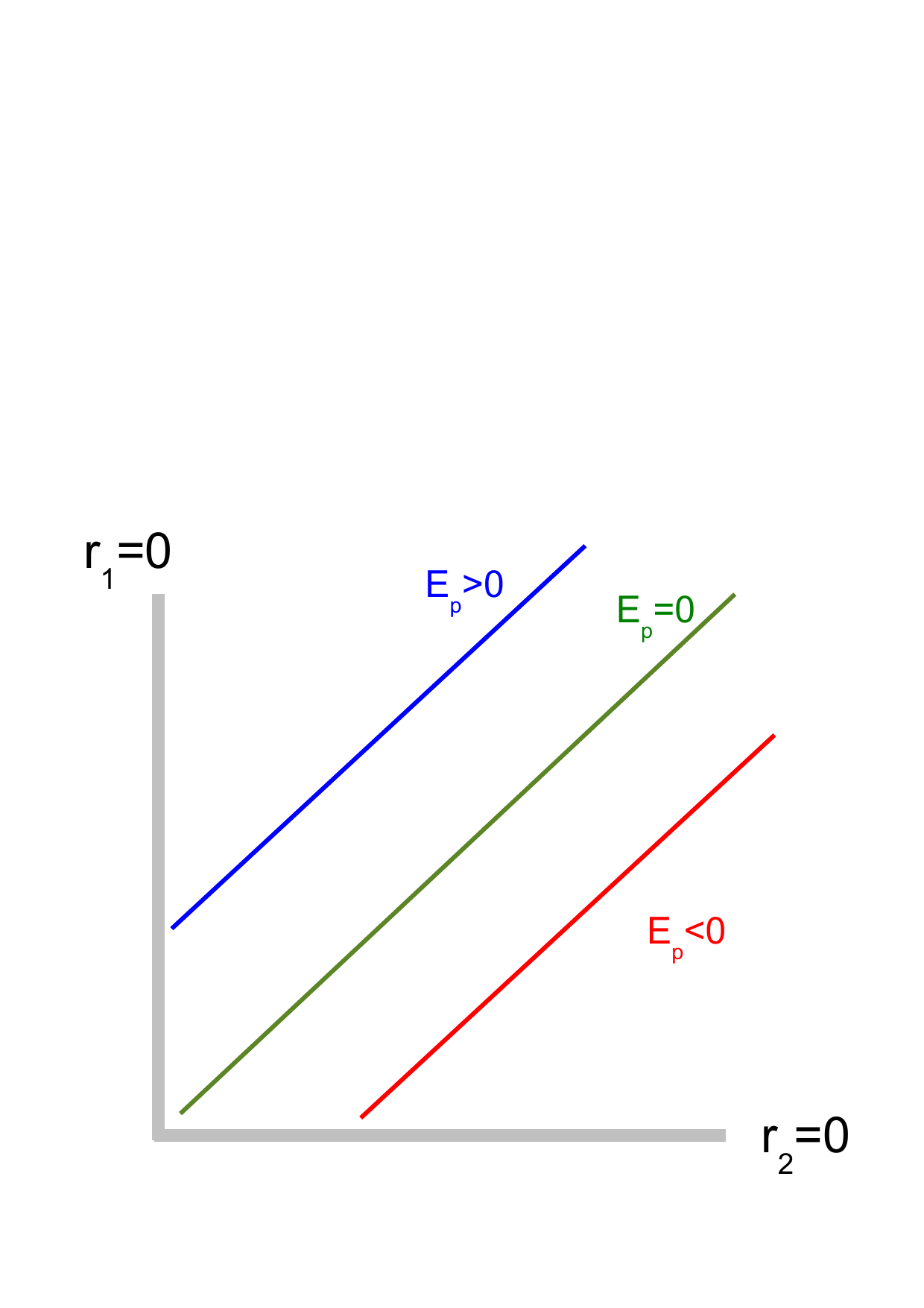} 
\includegraphics [width=0.5\linewidth]{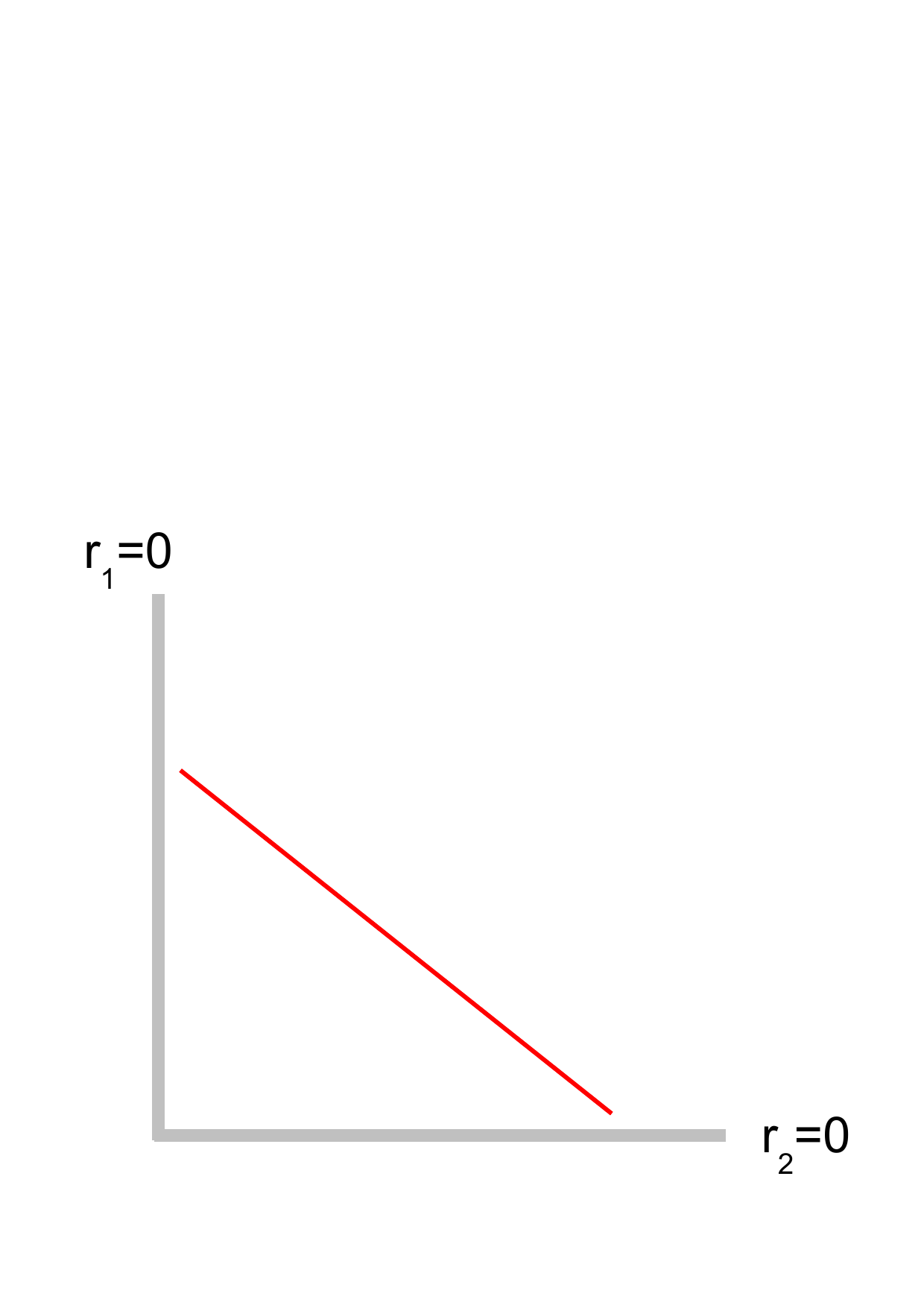} 
}  
\caption{\small The diffusion lines are unbounded in the case
$\omega_1 \omega_2<0$(the figure to the left) and bounded in the
case  $\omega_1 \omega_2>0$.}
\label{orbits}
\end{figure}

\begin{Lem}\label{lemma_U}
For any  Liouville vector $  \om=(\om_1,\om_2)$, 
  any $\eps>0, s \in \N $, $A_0>0$, and any symplectic map $V$ that is identity 
outside $\widetilde R(A_0)$ we have that for any $A>A_0$ there exist
  $U \in \cU$ and $T>0$ with the following properties: 
\begin{enumerate}
\item $U=$Id in the complement of $\widetilde R(2A)$,

\item ${\|H_0^2\circ U^{-1} \circ V^{-1} -H_0^2\circ V^{-1} \|}_s<\eps$

\item  
For any $p\in \R^2_+\times \T^2 $, the torus $V \circ U (\cT (p))$
intersects at least one of the margin sets, and this
intersection is  $\eps$--close to $\widetilde E_p\cap M(A)$.  

\item Moreover, for any $p\in \widetilde R(A)$, the torus
$V \circ U (\cT (p))$ intersects two of the margin sets, and this
intersection is  $\eps$--close to $\widetilde E_p\cap M(A)$.

\end{enumerate} 
\end{Lem}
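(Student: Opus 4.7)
My plan is to construct $U$ as a long composition $U = u_N \circ \cdots \circ u_1$, where each $u_j$ is obtained from Lemma~\ref{lem_u} applied with the common parameters $2A$ (in place of $A$), common scaling $b = (20A)^{-3}$, rapidly decaying tolerances $\e_j$, and integer vectors $q_j$ chosen so that $|q_j|$ grows fast enough that $|q_{j+1}|^{-1}$ is smaller than every length scale generated by $u_1,\ldots,u_j$. Property (1) is then immediate since each $u_j$ is identity outside $\widetilde R(2A)$. For (2), I would use the telescoping identity
\begin{equation*}
H_0^2 \circ U^{-1} - H_0^2 \;=\; \sum_{j=1}^N \bigl(H_0^2 \circ u_j^{-1} - H_0^2\bigr) \circ u_{j+1}^{-1} \circ \cdots \circ u_N^{-1},
\end{equation*}
bound each summand in $C^s$ by combining Lemma~\ref{lem_u}(2) with a chain-rule estimate that uses the closeness to identity of $u_{j+1}^{-1}\circ\cdots\circ u_N^{-1}$ in $C^{s+1}$, and then absorb the outer composition with $V^{-1}$ into a bounded factor depending on $V$. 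Choosing $\e_j$ geometrically small compensates these constants.

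\textbf{Geometric picture.} Denote $(R^{(j)},\Theta^{(j)}) := u_j\circ\cdots\circ u_1(r,\theta)$. Formula \eqref{lem_u_r} of Lemma~\ref{lem_u} yields the recursion
\begin{equation*}
R^{(j)}_1 = R^{(j-1)}_1\bigl(1 + b\, R^{(j-1)}_2 \cos \xi_j\bigr), \qquad R^{(j)}_2 = R^{(j-1)}_2\Bigl(1 + b\,\tfrac{q_{j,2}}{q_{j,1}} R^{(j-1)}_1 \cos \xi_j\Bigr),
\end{equation*}
with $\xi_j = 2\pi\langle q_j, \Theta^{(j)}\rangle$. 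Since Lemma~\ref{lem_u} forces $\langle q_j,\omega\rangle$ arbitrarily small, $q_{j,2}/q_{j,1}$ is arbitrarily close to $-\omega_1/\omega_2$, so the infinitesimal stretching direction $(R^{(j-1)}_2,(q_{j,2}/q_{j,1})R^{(j-1)}_1)$ is nearly tangent to the energy line $E_p$. Combined with the $C^s$ closeness from the first paragraph, this places $U(\cT(p))$ in an $\e$-tube around the energy leaf $\widetilde E_p$.

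\textbf{Amplification, and the main difficulty.} To establish (3) I need an angle $\theta^+\in\T^2$ along which all $\cos\xi_j \geq 1/2$ simultaneously, and a symmetric $\theta^-$ with all $\cos\xi_j \leq -1/2$. The plan is a nested refinement: start with a ball $W_1 \subset \T^2$ of diameter $\sim |q_1|^{-1}$ on which $\cos\xi_1 \geq 1/2$, then, using $|q_2|\gg |q_1|$ so that $\xi_2$ completes many full oscillations inside $W_1$, pick a sub-ball $W_2 \subset W_1$ of diameter $\sim |q_2|^{-1}$ on which also $\cos\xi_2 \geq 1/2$, and iterate down to a nonempty $W_N$. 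At any $\theta^+\in W_N$ the recursion yields growth factors $\geq 1 + cbA^{-1}$ per step in the direction along $E_p$ corresponding to positive cosines, so choosing $N$ large enough that $(1+cbA^{-1})^N \geq 10A^2$ forces the trajectory of $R^{(j)}$ to exit $R(A)$ through one end of $E_p$. The symmetric argument at $\theta^-$ yields exit through the opposite end, and connectedness of $U(\cT(p))$ together with its $\e$-proximity to $\widetilde E_p$ ensures that these exit points are $\e$-close to $\widetilde E_p \cap M(A)$. In the case $\omega_1\omega_2 < 0$ both intercepted ends lie in distinct upper and lower margin sets, which also delivers (4); otherwise the same amplification applied one-sided suffices, and for $p$ outside $\widetilde R(2A)$ the point already lies in a margin set where $U=V=\mathrm{Id}$. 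The hard part of the proof will be quantifying this nested refinement: one must choose $|q_{j+1}|$ large enough not only relative to $|q_j|$ but relative to the full $C^{s+1}$-geometry of all previously built $u_{j'}$, so that the perturbations $|\Theta^{(j')}-\Theta^{(j'-1)}|\lesssim |q_{j'}|^{-1}$ from \eqref{lem_u_th} at later steps do not spoil the condition $\cos\xi_j \geq 1/2$ at earlier ones. This is exactly where the Liouville hypothesis on $\omega$ is essential, since it furnishes arbitrarily good integer approximations at each stage of the recursion.
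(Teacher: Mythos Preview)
Your overall strategy matches the paper's, but there is a genuine gap in your argument for (2), and it stems from the order of composition you chose.

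You set $U=u_N\circ\cdots\circ u_1$, so your telescoping term is $(H_0^2\circ u_j^{-1}-H_0^2)\circ u_{j+1}^{-1}\circ\cdots\circ u_N^{-1}$, and you claim the outer composition is close to the identity in $C^{s+1}$. That is false: each $u_k$ is built from a generating function containing $\sin(2\pi\langle q_k,\Theta\rangle)$, so its $C^{s+1}$ norm is of order $|q_k|^{s+1}$, and the $q_k$ for $k>j$ are the \emph{largest} vectors in the construction. Composition with $u_{j+1}^{-1}\circ\cdots\circ u_N^{-1}$ therefore multiplies the $C^s$ norm by a factor polynomial in $|q_{j+1}|,\ldots,|q_N|$, which are chosen \emph{after} $u_j$. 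You cannot pick $\e_j$ in advance to absorb these unknown constants, so the argument for (2) breaks down.

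The paper resolves this by taking the opposite order $U=u_1\circ\cdots\circ u_N$. Then $U^{-1}=u_N^{-1}\circ\cdots\circ u_1^{-1}$ and the telescoping puts $u_{j-1}^{-1}\circ\cdots\circ u_1^{-1}\circ V^{-1}$ on the outside --- maps that are already fixed when $u_j$ is chosen. The Liouville hypothesis then lets one take $|\langle q_j,\omega\rangle|$ small enough to absorb their (large but known) $C^s$ norms, giving the stronger estimate $\|(H_0^2\circ u_j^{-1}-H_0^2)\circ u_{j-1}^{-1}\circ\cdots\circ u_1^{-1}\circ V^{-1}\|_s<2^{-j}\e$ directly.

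This reversal also changes the geometric picture: now $u_N$ (largest $|q_N|$) acts first on $\cT(p)$, then $u_{N-1}$, etc. After $j+1$ steps the accumulated angular drift is $|\theta^{j+1}-\theta^0|\lesssim \sum_{i\ge N-j}|q_i|^{-1}$, dominated by the \emph{current} scale $|q_{N-j}|^{-1}$; hence $|\langle q_{N-j},\theta^{j+1}\rangle-\langle q_{N-j},\theta^0\rangle|\le 10^{-2}$ and each phase condition becomes a condition on $\theta^0$ alone. In your ordering the drift is dominated by $|q_1|^{-1}$ while the relevant frequency is $|q_j|$, so the product $|q_j|\cdot|q_1|^{-1}$ is enormous and the good set at step $j$ is the preimage under a highly distorted map rather than a simple collection of intervals. (Incidentally, your worry that ``later steps spoil earlier conditions'' is inverted: $\xi_j$ depends only on $u_1,\ldots,u_j$, so later steps cannot affect it; it is the earlier, slower maps that spoil the later, faster phase conditions in your ordering.)
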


\begin{proof}[Proof of Proposition  \ref{lemma.conjugacy}. ]
{ Let $U$ be as in Lemma \ref{lemma_U}. The conditions (1) and (2) of
  the proposition follow directly from (1) and (2) of Lemma
  \ref{lemma_U}. The invariant torus of $H= H_0^2\circ U^{-1}\circ
  V^{-1}$ passing
through a point $P=V\circ U(p)$ has the form $V\circ U(\cT(p))$.
It follows from (3) of Lemma \ref{lemma_U} that the latter torus 
intersects $M(A)$. 
Since $\o$ is irrational, the orbit of $p$ is dense on $\cT(p)$. As a
consequence, the orbit of $P$ under the flow of $H$ 
is dense on $V\circ U(\cT(p))$, and conclusion (3) of the proposition
follows.

Let $\o_1\cdot \o_2<0$. In this case, the line $E_{p}$ has a 
positive slope of $\Omega=-\o_1/\o_2$. Take a point $p$ such that
$\max_{i=1,2}\{r_i(p)\} = 2A^{-1}$, and $E_{p}$ passes through
$r=(A^{-1},A^{-1})$. Since $p\in\widetilde R(A)$, the torus
$U(\cT(p))$ intersects two margin sets close to the two components of 
$E_{p}\cap M(A)$. In particular, it contains points in $\{r_1 <
2A^{-1}\} \cap \{r_2 < 2A^{-1}\}$, as well as in 
$\{r_1> A \} \cup \{r_2 > A \}$ (this corresponds to $\widetilde{E}_p$ close to $0$ in Figure 1). The same is true for the torus $V\circ
U(\cT(p))$
since $V$ preserves the margin sets. Of course, this condition holds
for an open set of starting points $p$.  Now, conclusion (4) of the proposition holds for any $p'\in V\circ
U(\cT(p))$ with $p$ as above. }
\end{proof}

We now turn to the proof of Lemma \ref{lemma_U}. We will build the conjugacy $U$ by superposing a large number of very slightly wiggling symplectomorphisms which we now present.

\begin{SubLem}\label{lem_u}
For any {\it Liouville} vector $ \omega$, for any $A>1$, $s>0$, $Q>0$
and $\e > 0$, 
there exists an integer vector $  q=(q_1,q_2)$, $ |q| > Q $, 
and a symplectic map 
$u \in C^{\infty}$ with the following properties:
\begin{itemize} 

\item $u=$Id \ in the complement of $\widetilde R(2A)$.

\item $\| H_0^2 \circ u^{-1} - H_0^2\|_{s}<\e$, 

\item For any $p=(r, \theta) \in \widetilde R(A)$,  
the image $(R,\Theta)= u(r, \theta)$  
satisfies
\begin{equation}\label{lem_u_r}
\begin{aligned}
R_1=r_1 (1 + b r_2 \cos (2\pi \left<   q,  
  \Theta \right>)), \\  
R_2 = r_2 (1 + b (q_2/q_1 )r_1 \cos (2\pi \left<   q,  
  \Theta \right>))
\end{aligned} 
\end{equation}
where $b=(10 A)^{-3} $, and
\begin{equation}\label{lem_u_th}
| \Theta - \theta|_0 < 10^{-3} | q|^{-1}.
\end{equation}
\end{itemize}
\end{SubLem}
These expressions mean that any small ball  
gets stretched by $u$ both in $r_1$- and $r_2$-direction with the
amplitude of order $br_1r_2\geq bA^{-2}$, and large frequency $|q|$
(notice that the amplitude is small for small $r_i$, but bounded from below).  
$U$ will be a composition of 
a large number of such functions $u_j$, $j=1,\dots, N$,
constructed with the same $A$ and $b$,
but decaying $\e_j$ and growing $|  q_j|$.

Here is a heuristic idea. 
Vector $q$ plays two
important roles. 
On the one hand, having large $|q|$, we get high
frequency of oscillation for $R_1$ and good closeness between 
$\theta$ and $\Theta$ (see the formulas above). 

On the other hand, the key estimate  
$\| H_0^2 \circ u^{-1} - H_0^2\|_{s}<\e$ needs $\left<  q, \omega
\right>< \e/pol(q)$, where $pol(q)$ is a polynomial of $q$. It is here
that we use the assumption of $\omega$ being Liouville. It guarantees
that there exists a $q$ with sufficiently large components
providing the desired smallness of $\left<q, \omega \right>$.

\medskip

\begin{proof}[Proof of Sublemma \ref{lem_u}.]  
Let $G(t)\in C^\infty$ be a monotone cut-off function such that
$|G'(t)|\leq 2$ for all $t$ and 
\begin{equation}\label{cutoff}
G(t)=
\begin{cases}
0, \quad \text{for } t\leq 0, \\
 1, \quad \text{for } t > 1. 
\end{cases}
\end{equation}
Denote by $1/A$ the "margin size", and by $b$ the scaling constant:
$b= (10 A)^{-3} $.
Assume without loss of generality that $\Omega=-\om_1 /\om_2 \in (0,1]$.
Define $g(t)= G( 2A t -1)-G(t-A)$.  In this case
$|g'(t)|\leq 4A$ for all $t$, and
$$
g(t)=
\begin{cases}
0, \quad \text{for } t \leq (2A)^{-1}  \text{ or } \ t  \geq A+1, \\
1, \quad \text{for } t \in [A^{-1}, A] . 
\end{cases}
$$ 
Given two integers $q_1$ and $q_2$, whose choice will be specified
later,
we define the symplectic map $u:(  r,  \theta)\mapsto ( 
R,  \Theta) $
by a generating function
$$
S(  r,  \Theta) = \left<   r,   \Theta\right> + 
\frac{b}{2\pi q_1} r_1 r_2 g(r_1) g(r_2) \sin (2\pi \left<   q,  
  \Theta \right>).
$$
It satisfies
$$
\begin{aligned}
R_1= \frac{\partial S (r, \Theta)}{\partial \Theta_1} & 
=r_1(1+ b r_2 g(r_1) g(r_2) \cos (2\pi \left<   q,   \Theta \right>)), \\
R_2 =\frac{\partial S (r, \Theta)}{\partial \Theta_2 } & 
=r_2 (1 + b\frac{q_2}{q_1 } r_1 g(r_1) g(r_2) \cos (2\pi \left<   q,   \Theta \right>)), \\
\th_1 =\frac{\partial S (r, \Theta)}{\partial r_1}& 
= \Theta_1 + \frac{b}{2\pi q_1} \left( r_1 g(r_1) \right)'_{r_1}  
r_2 g(r_2)  \sin (2\pi \left<  q,   \Theta\right>)\\
\th_2 =\frac{\partial S (r, \Theta)}{\partial r_2}& =
\Theta_2 + \frac{b}{2\pi q_1} r_1 g(r_1) \left(r_2 g(r_2) \right)'_{r_2} \sin (2\pi \left<  q,   \Theta\right>).
\end{aligned}
$$
To see that $S$ defines a diffeomorphism it
is enough to verify that the following determinant does not vanish:
$$
\begin{aligned}
&   \text{det} \left( \frac{\partial   R}{\partial   r } \right)=
  \text{det} \left( \frac{\partial   \th}{\partial   \Theta } \right)
 = \text{det} \left( \frac{\partial^2 S (r, \Theta)}{\partial
    r \partial \Theta} \right) = \\
&1 + b\left(   \left( r_1 g(r_1) \right)'_{r_1} r_2 g(r_2)  + 
     \frac{q_2}{q_1} r_1 g(r_1) \left(r_2 g(r_2) \right)'_{r_2}\right)
 \cos(2\pi \left<   q,   \Theta \right>) \\  
&\geq 1- 40 b A^3>0
\end{aligned}
$$ 
by the choice of $b=(10 A)^{-3} $. 

By a local inverse function theorem, $\Theta$ can be expressed as a function
of $(r, \theta)$. We get $\Theta_i=\theta_i+O(|  q|^{-1})$.  
Plugging in this expression into the first two lines, we get a formula
for $u$ in terms of $(  r,   \theta)$. The inverse $u^{-1}$
exists by the same argument.

\medskip

Since $u$ is a diffeomorphism and equals identity in a 
neighborhood of 
$\{r_1=0 \}$ and $\{r_2=0 \}$, we get that the image of $u$ satisfies $R_1>0$ and 
$R_2>0$.
\medskip 

Here we estimate $\| H_0^2\circ u^{-1} (r,\theta)-H_0^2 \|_s$. 
For the above coordinate change we have 
$$
\begin{aligned}
H_0^2\circ u (r,\theta)-H_0^2(r,\theta) = \left< R,  \omega\right> -\left< r,\omega\right>= \\
b \frac{1}{q_1} r_1g(r_1) r_2g(r_2)
\left< \omega , q\right>  \cos (2\pi \left< q, \Theta \right>),
\end{aligned}
$$ 
where $ \Theta = \Theta (r,\theta)$.
We can estimate 
$$
\| H_0^2\circ u^{-1} -H_0^2 \|_{s} =\| (H_0^2 -H_0^2\circ u) \circ u^{-1} \|_{s}\leq F(q, A, s)
\cdot\left<   \omega ,  q\right>, 
$$ 
where $F(q, A,s)$ is a polynomial of $q_1, q_2$ whose degree
depends only on $s$, and the coefficients are bounded by
functions of $A$ and $s$. 
Since vector $\omega$ is Liouvillean,
there exists (an infinite number of) $q$ such that 
$\left< \omega, q \right> < \e/ F(q, A,s) $, 
and the desired estimate follows.
\end{proof}

\begin{proof}[Proof of Lemma \ref{lemma_U}. ]
Fix $V$, $s$, $A_0$ and $\e>0$. Since ($V-$Id) is
compactly supported inside $\widetilde R(A_0)$, the same holds for any $A>A_0$. 
Let  $b $ be as in Sublemma \ref{lem_u}, $\Omega=|\om_1 /\om_2|$ assumed WLOG
to lie in $ (0,1]$, and let $N$ be such that 
\begin{equation}\label{choice_N}
(1 + b /(4A))^N > A^{2}, \quad
(1 - b /(4A))^N < 1/A^2.
\end{equation}
We shall define $u_j$, $j=1,\dots ,N$, by Sublemma \ref{lem_u}
inductively in $j$. 
We choose $q_j$ in the construction of $u_1$ so
that 
$$
|q_j|>|q_{j-1}|^3.
$$
Moreover, for  $j=1,\dots ,N$, $q_j$ are such that $u_j$ satisfies a
(much stronger) condition
\begin{equation}\label{est_uj}
\|\left( H_0^2 \circ u_j^{-1}- H_0^2 \right) 
\circ u_{j-1}^{-1} \circ \dots \circ u_1^{-1} \circ V^{-1}\|_{s}<2^{-j} \e.
\end{equation} 
Recall from the proof of Sublemma \ref{lem_u} that this is done 
by choosing vector $q_j$ at each step so that 
$\left< \omega , q_j\right>$ is sufficiently small depending
on $q_1,\dots,q_{j}, s, A $ and $\eps$. 
Define 
$$
U=u_1\circ \dots \circ u_N .
$$ 
Then  the first statement of the lemma holds since each $u_j$ 
is identity outside $\widetilde R(2A)$ by construction.
The second one follows from (\ref{est_uj}) and the triangle inequality:
$$
\begin{aligned}
&\| \left( H_0^2\circ U^{-1} - H_0^2 \right)\circ V^{-1}\|_s \leq \\
&\sum_{j=1}^N  
\| \left( H_0^2\circ u_j^{-1} - H_0^2 \right)     \circ u_{j-1}^{-1} \circ
\dots \circ u_0^{-1}\circ V^{-1} \|_{s} 
<  \e \sum_{j=1}^N 2^{-j} < \e,
\end{aligned}
$$
where $u_0 =id$ for the uniformity if notations.

To prove (3), fix $p^0=(r^0,\th^0)\in \widetilde R(A)$, and consider 
the flat torus $\cT_{p^0}$ passing 
through $p^0$. Then the invariant torus
of $H_0\circ U^{-1}$ passing through $P^0=U(p^0)$ has the form 
$$
U (\cT_{p^0}) = u_1\circ \dots \circ u_N(\cT_{p^0}).
$$
It lies on the invariant surface $\{ H_0^2\circ U^{-1}(p) = const \}$, which is
$\e$-close to $\widetilde E_{p^0}$, due to (2). 

Since $V$ is identity on the margin sets, we just have to show that  $U (\cT_{p^0})$ intersects the two margin sets.  We
assume without loss of generality that
$\theta_1(p^0)=0$. Given $\th \in \{0\} \times \T$ we define 
$(r^{1},\th^{1})=u_N(r^{0}, \th^{0}) $, $(r^{2},\th^{2})=u_{N-1}(r^{1},\th^{1}) $, 
and in general
\begin{equation}\label{notations_l8}
(r^{j+1},\th^{j+1})= u_{N-j} (r^{j},\th^{j})= u_{N-j}\circ \dots
u_N(r^{0},\th^{0}), 
\quad j=0,\dots N-1.
\end{equation}
The lower index indicates the component:
$(r^{j},\th^{j})=(r^{j}_1,r^{j}_2, \th^{j}_1,\th^{j}_2)$.

So, (3) follows if we prove the following two claims.

{\bf Claim 1.} There exists $\hat{\theta} \in \{0\} \times \T$ and $\check \theta\in \{0\} \times \T$ such that
\begin{align} \label{cosplus} 
\cos(2\pi
\left<\hat \th^{j+1}, q_{N-j} \right>)&>1/2 \ \text{ for all } \
j=0,\dots N-1 \\
\label{cosmoins} 
\cos(2\pi
\left<\hat \th^{j+1}, q_{N-j} \right>)&<-1/2 \ \text{ for all } \
j=0,\dots N-1 
\end{align}

\bigskip 

{\bf Claim 2.}   $U(r,\hat \th)$ and $U(r,\check \th)$ lie close to the two different ends of $\widetilde E_{p^0} \cap R(A)$

\bigskip

\noindent {\it Proof of Claim 1.}   
We  
will actually  restrict ourselves to an interval $K(p^0)$ 
in $\cT_{p^0}$ and show that even 
$U(K(p^0))$ intersects two margin sets. We do so in order to use
one dimensional calculus.

So,
assume without loss of generality that
$\theta_1(p^0)=0$ and let
$$
K(p^0):= \{r^0\}\times \{0\}\times \T
$$ 
be an {\it interval in $\th_2$-direction} passing through $p^0$.

We shall present two subsets of $K(p^0)$, that we denote by
$p^0\times\hat L_0$ and 
$p^0\times\check L_0 $, 
such that $U(p^0\times\hat L_0) $ intersects 
$\{r_1\geq A\} \cup \{r_2\geq A\}$, and  
$U(p^0\times\check L_0) $ intersects 
$\{r_1\leq A^{-1}\} \cup \{r_2\leq A^{-1}\}$.

Recall the notations from (\ref{notations_l8})
By Sublemma \ref{lem_u}, if $r^{j}\in R(A)$, we have: 
$r^{j+1}_1=r^{j}_1( 1 + br^{j}_2 \cos(2\pi
\left<\th^{j+1}, q_{N-j} \right>))$ for all $j=0,\dots N-1$. 

The simplified idea is the following. Imagine that $\th^{j+1}=\th^0$
for all $j$. Since; by construction, 
the frequencies $|q_{j}|$ grow very fast with $j$, there are many
points where  $\cos(2\pi \left<\th^{0}, q_{j} \right>))>1/2$ for all
$j\leq N$. This implies that $r^{j+1}_1 > r^{j}_1 + br^{j}_1r^{j}_2/2 $ 
for all $j=0,\dots N-1$, and we can prove that $r^{N}_1>A$.

In reality though, $\th^{j+1}$ can be different from $\th^0$ (but
close), and one should be more careful. We want
to describe the set of points $\hat \th^0 \in \{0\}\times \T$  such that for the images of
$(r^0,\hat \theta^0)$ we have:  
$$ 
\cos(2\pi
\left<\hat \th^{j+1}, q_{N-j} \right>)>1/2 \ \text{ for all } \
j=0,\dots N-1.
$$ 
We claim that the set
$$ 
\hat J_{j+1}=\{\th^0\in \{0\}\times \T \mid  \cos(2\pi \left<\hat \th^{j+1}, q_{N-j}
\right>)>1/2 \}
$$
consists of $|q_{N-j}|$ disjoint intervals of size at least
$C/(10|q_{N-j}|)$ 
whose
midpoints are at most $C/|q_{N-j}|$ distant from the nearest
neighbour,
where $C$ is a constant only
depending on $\Omega=-\o_1/\o_2$. 
Since $|q_{j}|$ are assumed to grow very fast with $j$, the above
estimates imply that 
$\hat L_0:=\cap_{j=1}^{N} \hat J_{j}$ is nonempty, which proves Claim 1.

To prove the above estimate, notice that it becomes evident if we replace
$\th^{j+1}$
in the definition of $\hat J_{j+1}$ by $\th^0$. In this simplified case, 
the desired intervals are of size $C/(3|q_{N-j}|)$, and the 
midpoints are $C/|q_{N-j}|$ distant.
By Sublemma \ref{lem_u}, $| \theta^1 - \theta^{0}|_0< 10^{-3}
|q_N|^{-1}$,  $| \theta^2 - \theta^{1}|_0 < 10^{-3} |q_{N-1}|^{-1}$,
and in general
$| \theta^{l+1} - \theta^{l}|_0 < 10^{-3} | q_{N-l}|^{-1}$.
Then
\begin{equation}\label{est(th0->thj)}
| \th^{j+1}- \th^0 |_0 \leq 10^{-3} \left(\frac{1}{|q_{N}|}+\dots + \frac{1}{|q_{N-j}|}\right),
\end{equation}
and 
$|\left<\th^{j+1}, q_{N-j} \right> - \left<\th^{0}, q_{N-j}
\right>| \leq 10^{-3}  (\frac{1}{|q_{N}|}+\dots
+\frac{1}{|q_{N-j}|}) |q_{N-j}| \leq 10^{-2}
$. 
Hence, $\hat L_0:=\cap_{j=1}^{N} \hat J_{j}$ is nonempty.

By the same argument, there exists a nonempty subset 
$\check L_0$ of $\{0\}\times \T^1$ 
 such that  for 
$(r^0,\check \theta^0)$ with $\check \theta^0 \in \check L_0$ we have:  
$ \cos(2\pi \left<\check \th^{j+1}, q_{N-j} \right>) < -1/2$ for all
$j=0,\dots N-1$.

\medskip

\noindent {\it Proof of Claim 2.} 
  We will treat separately the cases $\om_1\om_2<0$ and $\om_1\om_2>0$.
  
\medskip 

\noindent {\bf Case 1 : $\om_1\om_2<0$.   }   We will show that $U(r,\hat \th)$ lies in $\{r_1\geq A\} \cup \{r_2\geq A\}$, while 
 $U(r,\check \th)$ lies in $\{r_1\leq 1/A\} \cup \{r_2\leq 1/A\}$. 

 For every $j=1,\dots N$, we have the following recursive estimate:
$$
\hat r^{j}_1= \hat r^{j-1}_1 \left( 1+b \hat r^{j-1}_2 g(\hat
r^{j-1}_1)g(\hat r^{j-1}_2) \cos \left( 2\pi \left< q_j, \hat
    \theta^j \right> \right) \right) \geq \hat r^{j-1}_1.
$$ 
In the same way, $ \hat r^{j}_2 \geq \hat r^{j-1}_2$ (here we use that $q_2/q_1>0$).

Hence, if for some $i \in \{1,2\}$ and some $j<N$, $\hat r^{j}_i \geq A$, then  $\hat r^{N}_i \geq
A$. 

Assume on the contrary that for  all $j<N$, $\hat r^{j}_i < A$ for
$i=1,2$. Then, since $g\equiv 1$ inside $R(A)$, we have, using (\ref{choice_N}) that  
$$\hat r^{N}_1 \geq 
r^0_1 \prod_{j=0}^{N-1}(1+b \hat r^j_2\cos(2\pi \left< q_j, \hat  \theta^j \right>) )
\geq A^{-1} (1+b(4A)^{-1})^{N} \geq  A. 
$$
We used the fact that $R(2A)$ is invariant, so $\hat r^{j}_i \geq 1/(2A)$
for $i=1,2$ and $j=1,\dots N$.

In conclusion, $U(r,\hat \th)$ lies in $\{r_1\geq A\} \cup \{r_2\geq A\}$.

Now, for $\check \th$ the argument is similar :  For every $j=1,\dots N$
$$
\check r^{j}_1= \check r^{j-1}_1 \left(1 + b \check r^{j-1}_2 g(\check
r^{j-1}_1)g(\check r^{j-1}_2) \cos\left(2\pi \left< q_j, \check  \theta^j
\right>\right) \right) \leq \check r^{j-1}_1.
$$ 
In the same way, $ \check r^{j}_2 \leq \check r^{j-1}_2$ (here we use that $q_2/q_1>0$).

Hence, if for some $i \in \{1,2\}$ and some $j<N$, $\check r^{j}_i \leq 1/A$, then  $\check r^{N}_i \leq
1/A$. 

Assume to the contrary that for  all $j<N$, $\check r^{j}_i > 1/A$ for
$i=1,2$. Then, since $g\equiv 1$ inside $R(A)$ we have, using (\ref{choice_N}) that  
$$
\check r^{N}_1 \leq r^0_1 \prod_{j=0}^{N-1}\left( 1+b \check r^j_2\cos 
\left(  2\pi \left< q_j, \check  \theta^j \right>\right) \right)
< A (1-b (4A)^{-1} )^{N}  <  A^{-1}. 
$$
In conclusion, $U(r,\check \th)$ lies in $\{r_1\leq 1/A\} \cup \{r_2\leq 1/A\}$.

\medskip 

\noindent {\bf Case 2 : $\om_1\om_2>0$.   }  We will show that $U(r,\hat \th)$ lies in $\{r_1\geq A\} \cup \{r_2\leq 1/A\}$ while 
$U(r,\check \th)$ lies in $\{r_1\leq 1/A\} \cup \{r_2\geq A\}$. 

Contrary to {\bf case 1}, here we have that $\hat r^{j}_1$ is
increasing while 
$\hat r^{j}_2$ is decreasing (this because $q_1/q_2<0$). If for some $j<N$,  $\hat r^{j}_2 \leq 1/A$ we finish. If not, and if for some   
$j<N$,  $\hat r^{j}_1 \geq A$ we also finish. Hence we assume that for all  $j<N$,  $\hat r^{j}_1 < A$ and   $\hat r^{j}_2 > 1/A$. In such a situation and due to the fact that $g \equiv 1$ in $R(A)$ we have that 
$$\hat r^{N}_1 \geq 
r^0_1 \prod_{j=0}^{N-1}(1+b \hat r^j_2\cos(2\pi \left< q_j, \hat  \theta^j \right>) )
\geq  A^{-1} (1+b(4A)^{-1})^{N} \geq  A. 
$$

To treat $U(r,\check \th)$ we just exchange the roles of the
coordinates $r_1$ and $r_2$.

\bigskip

We proceed now to the proof of (4). 
Fix  
$p\notin \widetilde R(A)$. Consider the case
$r_1(p) < A^{-1}$, the other cases being similar. Let $\bar p$ be such that
$E_p=E_{\bar p}$ and $\bar p\in \widetilde R(A) $. The torus $\cT_{\bar
  p}$ separates the space $\widetilde E_{p}$, and the torus $\cT_{p}$ lies on
one side of it, namely the one intersecting the
set $\{ r_1 < A^{-1} \}$. Therefore, the torus $U(\cT_{\bar
  p})$ separates the space $U(\widetilde E_{p})$, having
$U(\cT_{p})$ on one side of it. Now, $U(\cT_{\bar p})$ intersects two margin
sets, so $U(\cT_{p})$ has to intersect at least one. Since  $\widetilde E_{p}$ is
close to $ U(\widetilde E_p)$, it has to intersect $\{ r_1 < A^{-1} \}$.

\medskip

We have shown that $U(\cT(p))$ 
is $\e$--close to (one or two components  of) $\widetilde E_{p}\cap M(A)$.
Finally, notice that $V\circ U(\cT(p))$
has the same property since $V=$Id
outside $\widetilde R(A)$. 

This finishes the proof of Lemma \ref{lemma_U}. \end{proof}

\section{Proofs for the case $d=4$}

The proof of Theorem \ref{theo.gdelta} follows from the 
proposition below that encloses one step of the successive conjugation scheme.
Fix $I_n=[a_{n},b_n]$ for some $n$.  Given a (large) $A$, we define
$$
I_n(A):= [a_{n}+A^{-1} ,b_n-A^{-1}]  , \quad  \widetilde I^4_n(A):= 
[A^{-1},A]^3 \times  I_n(A) \times \T^4.
$$

Define the margins set 
$$M_4(A)= \bigcup_{i=1}^3 \left(  \{r_{i}>A\} \bigcup \{r_{i}<A^{-1}\}\right).$$

\begin{prop}\label{main.liouville} Let  $n\in \N$, $\eps>0$, $s \in
  \N$
and $V \in \cU_4$ that is identity
outside $ \widetilde I^4_n(A_0)$
for some $A_0>0$, be given. Then for any $A>A_0$ there exist
$U \in \cU_4$, $T>0$ and $i\in \{1,2,3\}$ 
with $H=H_0\circ U^{-1} \circ V^{-1}$ 
satisfying the following properties:
\begin{enumerate}
\item $U={\rm Id}$ in the complement of $ \widetilde I^4_n(2A)$,

\item ${\|H -H_0\circ V^{-1} \|}_s<\eps$,

\item For any $P \in  \widetilde I^4_n(A)$  we have that
$\cO^T_H(P)$ intersects the set  $M_4(A)$;

\item Moreover, if $\o_{0,1}\o_{0,2}<0$ and if $n=3m$ for some $m \in \Z$, 
then there is a point $p'  \in  \widetilde I^4_n(A)$
such that $\cO^T_H(p')$ intersects both $\cap_{i=1}^3
\{r_{i}<2A^{-1}\}$ and $ \{r_{1}>A\}\cup  \{r_{2}>A\}$.

\end{enumerate}
\end{prop}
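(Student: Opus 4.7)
My plan is to treat $r_4$ as an inert parameter and reduce the construction to the 2-DOF Liouville scheme of Proposition~\ref{lemma.conjugacy}. By Proposition~\ref{colimacon}, on each $I_n$ exactly two of $f_1,f_2,f_3$ are constant, and the corresponding 2-vector
$$
\underline{\omega}_n := (\omega_{0,i_1}+\bar f_{i_1,n},\ \omega_{0,i_2}+\bar f_{i_2,n})
$$
is Liouville; the indices $(i_1,i_2)\in\{1,2,3\}$ depend only on $n \bmod 3$, and equal $(1,2)$ when $n\in 3\Z$. This is the pair that will appear in the conclusion.

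The main step is to redo Lemma~\ref{lem_u} with the generating function
$$
S(r,\Theta)=\langle r,\Theta\rangle + \frac{b}{2\pi q_{i_1}}\,r_{i_1}r_{i_2}\,g(r_{i_1})g(r_{i_2})g(r_{i_3})h(r_4)\sin\bigl(2\pi(q_{i_1}\Theta_{i_1}+q_{i_2}\Theta_{i_2})\bigr),
$$
where $g$ is the cut-off of Lemma~\ref{lem_u} and $h\in C^\infty(\R,[0,1])$ satisfies $h\equiv 1$ on $I_n(A)$ with $\supp h\subset I_n(2A)$. Since $S$ is independent of $\Theta_{i_3}$ and $\Theta_4$, the resulting exact symplectic $u$ preserves $r_{i_3}$ and $r_4$ and is identity outside $\widetilde I^4_n(2A)$, so $u\in\cU_4$. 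On $\supp h$ the frequencies $\omega_{i_1}(r_4),\omega_{i_2}(r_4)$ are precisely the two components of $\underline{\omega}_n$, so
$$
H_0^4\circ u-H_0^4=\omega_{i_1}(r_4)(R_{i_1}-r_{i_1})+\omega_{i_2}(r_4)(R_{i_2}-r_{i_2})
$$
is a polynomial in $(r,\theta)$ multiplied by $\langle\underline{\omega}_n,(q_{i_1},q_{i_2})\rangle$, and the Liouville property of $\underline{\omega}_n$ lets me choose $(q_{i_1},q_{i_2})\in\Z^2$ with arbitrarily large entries making this inner product, hence $\|H_0^4\circ u^{-1}\circ V^{-1}-H_0^4\circ V^{-1}\|_s$, as small as desired; this is the analog of (\ref{est_uj}), adapted to the pre-composition by the fixed $V$ and by previously chosen $u_1,\dots,u_{j-1}$. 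Composing $N$ such maps with rapidly growing $|q_j|$ and geometric error $2^{-j}\eps$ yields $U=u_1\circ\cdots\circ u_N\in\cU_4$ satisfying (1) and (2).

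For (3), fix $P\in\widetilde I^4_n(A)$ and set $p=U^{-1}\circ V^{-1}(P)$; the invariant torus of $H$ through $P$ is $V\circ U(\cT(p))$. Because the whole construction acts slice-wise in $r_4$ with the \emph{same} Liouville 2-vector $\underline{\omega}_n$ on $\supp h$, the argument of Lemma~\ref{lemma_U} applies verbatim at each $r_4$-slice to the $(r_{i_1},r_{i_2},\theta_{i_1},\theta_{i_2})$-projection of $U(\cT(p))$, showing that this projection hits two of the four margin sets $\{r_{i_k}>A\},\{r_{i_k}<A^{-1}\}$, $k=1,2$; since all four margin sets lie outside $\widetilde I^4_n(A_0)$ where $V$ is the identity, the same is true for $V\circ U(\cT(p))$. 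Non-resonance of $\underline{\omega}_n$ guarantees that the linear $H_0^4$-flow on $\cT(p)$ projects densely onto its $(\theta_{i_1},\theta_{i_2})$-factor, so the $\Phi^t_H$-orbit of $P$ meets the required margin set in finite time, with a uniform $T$ by compactness of $\widetilde I^4_n(A)$. Item~(4) is inherited directly from part~(4) of Lemma~\ref{lemma_U} when $n\in 3\Z$: then $(i_1,i_2)=(1,2)$ and the sign hypothesis $\omega_{0,1}\omega_{0,2}<0$ is exactly what is needed there to locate the diffusive point $p'$.

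The main obstacle is verifying that the cut-off $h(r_4)$ neither spoils the global invertibility of $S$ (an extra $h'(r_4)$-term appears in the $\theta_4$-equation, but is $O(|q|^{-1})$ and therefore harmless for large $|q|$) nor the decisive small-divisor $C^s$ estimate; the latter survives because $h$ is fixed once and for all, depending only on $A$ and $I_n$, and contributes only a bounded $C^s$-factor to the polynomial $F(q,A,s)$ of Lemma~\ref{lem_u}, which the Liouville property of $\underline{\omega}_n$ absorbs in the same way.
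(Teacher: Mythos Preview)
Your proof is correct and follows the paper's approach of extending the 2D Liouville construction of Proposition~\ref{lemma.conjugacy} by inserting a cutoff $h(r_4)$ so that $r_4$ acts as an inert parameter; the only cosmetic differences are that the paper first reduces to $V=\mathrm{Id}$ and omits your extra $g(r_{i_3})$ cutoff (its $U$ is literally independent of $(r_3,\theta_3)$), whereas your version keeps $V$ and enforces $U\in\cU_4$ more carefully. Two minor fixes: for item~(4) you want Proposition~\ref{lemma.conjugacy}(4) rather than Lemma~\ref{lemma_U}(4), and you should specify $r_3(p')\in[A^{-1},2A^{-1})$ so that the preserved $r_3$-coordinate remains below $2A^{-1}$ along the orbit.
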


\begin{proof}[Proof of Theorem \ref{theo.gdelta}] 
For $n,A,T \in \N^*$, let

$$\cD(n,A,T) := \left\{  H \in  \bar{\cH_4}  \mid 
\forall P\in  \widetilde I^4_n(A),  
  \cO^T_H(P)   \text{ intersects }  M_4(A)
\right\}  $$

It is clear that $\cD(n,A,T)$ are open subsets of  
$\bar{\cH_4}  $ in any $C^s$ topology. Proposition \ref{main.liouville} implies that $ \bigcup_{T \in \N^*}   \cD(n,A,T)$
is dense in  $ \bar{\cH_4}  $ in any $C^s$ topology.
Hence the following set $\bar{\cD} \subset \cD$ is a dense $G^\delta$ set (in any $C^s$ topology)
$$
\bar{\cD} = \bigcap_{A \in \N^*} \bigcap_{n \in \N^*} 
\bigcup_{T \in \N^*}  \cD(n,A,T).
$$

In case $\o_{0,1}\o_{0,2}<0$ we add to the definition of $\cD(n,A,T)$, when $n=3m$ for some $m \in \Z$,  the existence of a point $p' \in  \widetilde I^4_n(A)$ that satisfies  $\cO^T_H(p')$ intersects both $\cap_{i=1}^3
\{r_{i}<2A^{-1}\}$ and $ \{r_{1}>A\}\cup  \{r_{2}>A\}$. The second part of Theorem \ref{theo.gdelta} then follows from the fact that if for some fixed $A>0$, we consider  $ H \in \cD(n,A,T)$ for $n$ sufficiently large, then the orbit of the  corresponding point $p' \in  \widetilde I^4_n(A)$ has its $r_4$ coordinate always smaller than $A^{-1}$. The orbit of $p'$ hence intersects  both $\cap_{i=1}^4
\{r_{i}<2A^{-1}\}$ and $ \{r_{1}>A\}\cup  \{r_{2}>A\}$. \end{proof}
 
\medskip

\begin{proof}[Proof of Proposition \ref{main.liouville}]
Since $V$ equals identity near the axes, we can, by increasing $A$,
assume without loss of generality that $V={\rm Id}$. 

Assume that $I=[a,b]=I_{3n}$, the other cases being exactly
similar. In this case 
$f_1(r_4) \equiv \bar{f}_1$ and $f_2(r_4) \equiv \bar{f}_2$ for $r_4\in
I$. Moreover, for
$\o_1=\bar{f}_1 +\omega_{0,1}$ and  $\o_2=\bar{f}_2 +\omega_{0,2}$,
the vector $(\o_1,\o_2)$ is Liouville. We will hence be able to use the two-dimensional Liouville construction of Proposition \ref{lemma.conjugacy}. 

Let $a \in C^\infty(\R)$ be such that 
$a(\xi)=0$ if $\xi \notin I(2A)$ and $a(\xi)=1$ if $\xi \in I(A)=[a+A^{-1},b-A^{-1}]$, and $\|a\|_s\leq C(s,A)$ where $C(s,A)$ is a constant that depends on $s$ and $A$ (recall that $A^{-1}$ is assumed to be small compared to the size of $I=I_{3n}$).

We construct the  map $U$ as follows.  First,
 $U$ is independent of $(r_3,\th_3)$
(i.e., $U(r,\th)=U(r_1,r_2,r_4,\th_1,\th_2,\th_4)$). Second, $r_4$ acts as a
parameter, and for each $r_4\in I(A)$ the map $U$ equals the
map provided by Lemma
\ref{lemma.conjugacy} (call the latter $U_2$, where 2 indicates the
number of degrees of freedom).
More precisely,  
in the proof of Proposition \ref{lemma.conjugacy},  $U_2$ is constructed
as a composition of a certain number of symplectic
maps 
$u_2^j: \R^2 \times \T^2$, $(  r,  \theta)\mapsto ( 
R,  \Theta) $, each one given by  a generating function of the form 
$$
S_2^j(  r_1,r_2 , \Theta_1, \Theta_2) = 
\left<( r_1,r_2) , (\Theta_1,  \Theta_2) \right> + 
g_2^j(r_1,r_2, \Theta_1, \Theta_2),
$$
where $g_2^j$ is some smooth function equal to zero in the neighborhood of the axes. 

We extend $S_2^j$ and $u_2^j$ to  $S^j$ and $u^j$ defined for $(r,\theta)\in \R^4 \times \T^4$ by letting 
$$
S^j(r, \Theta) = \left< r, \Theta\right> + a(r_4) g_2^j(r_1,r_2, \Theta_1, \Theta_2) . 
$$
Since $a \equiv 0$ on $I(2A)^c$, we get (1) of Proposition
\ref{main.liouville} 
from (1) of Proposition \ref{lemma.conjugacy}. To check (2), observe that $a(r_4)$ appears just like a parameter in the construction of $S^j$ from that of $S_2^j$. Thus, 
since  $\|a\|_s\leq C(s,A)$ we get (2) by just taking $\eps$ sufficiently small in Proposition \ref{lemma.conjugacy}. 

Now for $P \in  \widetilde I^4_n(A)$  we have that $a(r_4(P))=1$ and since $r_4$ is invariant under the flow we get that the dynamics in $(r_1,r_2,\theta_1,\theta_2)$ coordinates is exactly that of Proposition \ref{lemma.conjugacy}, hence (3) holds.

To prove (4) of the Proposition, 
choose $p' \in   \widetilde I^4_n(A)$ with $r_4$, $\th_4$, $\th_3$ arbitrary, with $r_3<2A^{-1}$ and with the projection of $p'$ on the 
$(r_1,r_2,\theta_1,\theta_2)$ coordinates being  the point $p'_2$ that satisfies (4) of Proposition \ref{lemma.conjugacy}. Clearly, (4) of Proposition \ref{main.liouville} holds for $p'$.  \end{proof}

\section{Proofs for the case $d=3$.}

Fix $I_n=[a_n, a_{n+1}]$ for some $n$. Given a (large) $A$, we define
$$
I_n(A):=[a_{n}+A^{-1},a_{n+1}-A^{-1}], \quad \widetilde I^3_n(A):= 
R(A)\times  I_n(A)\times \T^3.
$$

Define the  margins set 
$$
\begin{aligned}
M_3(A) =  \bigcup_{i=1}^3 \{r_{i}>A\} \cup \{r_{1}<A^{-1}\}  \cup
\{r_{2}<A^{-1}\} 
\cup \\
\bigcup_{n \in \Z}  \{r_3 \in I_n \setminus
I_n(A)\}. 
\end{aligned}
$$

The following proposition is an analog of Proposition \ref{main.liouville}.  
\begin{prop}\label{lem_Ud3}
Let  $n\in \N$, $\eps>0$, $s \in
  \N$
and $V \in \cU_3$ that is identity
outside $\widetilde I^3_n(A_0)$
for some $A_0>0$, be given. Then for any $A>A_0$ there exist
$U \in \cU_3$, $T>0$ and $(i_1,i_2)\in \{1,2,3\}$
distinct, with the following properties:
\begin{enumerate}
\item $U={\rm Id}$ in the complement of $\widetilde I^3_n(2A)$,

\item ${\|H_0^3\circ U^{-1} \circ V^{-1} -H_0^3\circ V^{-1} \|}_s<\eps$,

\item For any $P \in\widetilde I^3_n(A) $  we have that
$\cO^T_H(P)$ intersects $M_3(A)$. 

\item Moreover, if $\o_{0,1}\o_{0,2}<0$ and if $n=3m$ for some $m \in \Z$,
then there is a point $p'  \in  \widetilde I^3_n(A)$
such that $\cO^T_H(p')$ intersects both $\cap_{i=1}^2
\{r_{i}<2A^{-1}\}$ and $ \{r_{1}>A\}\cup  \{r_{2}>A\}$.
\end{enumerate} 
\end{prop}

\begin{proof}[Proof of Theorem \ref{theo.gdelta_d3}] 
The proof follows exactly the same lines as the proof of  Theorem \ref{theo.gdelta}. 

For $n,A,T \in \N^*$, we let 
$$\cD(n,A,T) := \left\{  H \in  \bar{\cH_0}  \mid 
\forall P\in  \widetilde I^3_n(A),  
  \cO^T_H(P)   \text{ intersects }  M_3(A)
\right\} . 
$$
and we see that  $\bar{\cD} \subset \cD$  given by 
$$
\bar{\cD} = \bigcap_{A \in \N^*} \bigcap_{n \in \N^*} 
\bigcup_{T \in \N^*}  \cD(n,A,T).
$$
is a dense $G^\delta$ set (in any $C^s$ topology) in $ \bar{\cH_0}$.

In case $\o_{0,1}\o_{0,2}<0$ we add to the definition of $\cD(n,A,T)$ the existence of a point $p'$ satisfying conclusion (4) of Proposition \ref{lem_Ud3}. 
The second part of Theorem \ref{theo.gdelta} then follows from the fact that if for some fixed $A>0$, we consider  $ H \in \cD(n,A,T)$ for $n$ sufficiently large, then the orbit of the  corresponding point $p' \in  \widetilde I^3_n(A)$ has its $r_3$ coordinate always smaller than $A^{-1}$ since it lies in $I_n$.  The orbit of $p'$ hence intersects  both $\cap_{i=1}^3
\{r_{i}<2A^{-1}\}$ and $ \{r_{1}>A\}\cup  \{r_{2}>A\}$. \end{proof}
\medskip
\begin{proof}[Proof of Proposition \ref{lem_Ud3}] \  The proof is
  similar to that of  
Proposition \ref{main.liouville} (which relies on Lemma
\ref{lemma.conjugacy}). We shall only describe the
modifications that have to be done in order to get
the conjugacy $U$ for $d=3$.

When $r_3\in I_{3n}$, then $(\om_1,\om_2)$ is a constant Liouville
vector and the construction of Proposition \ref{lemma.conjugacy} is
carried out in the $(r_1,r_2,\th_1,\th_2)$-space, with $r_3$ acting as a parameter exactly as $r_4$ acted as a parameter in the proof of Proposition \ref{main.liouville}. 

The situation for $r_3 \in I_n$ for $n={3m+1}$ or $n={3m+2}$ is slightly different since $r_3$ is not invariant anymore. Suppose that $n=3m+1$, the other case being similar. For $r_3 \in I_n$ we have that 
 the vector $(\o_1,\o_3)$ is constant and Liouville.  The idea is that since $r_3$ will diffuse but remain inside $I_n$, one can still perform the construction of Proposition \ref{lemma.conjugacy} with $r_3$ playing in the same time the role of a parameter and that of a diffusing action coordinate. The diffusion in the $r_3$ variable  is thus limited to accumulating the set $  \{r_3 \in I_n \setminus I_n(A)\}.$ All the rest of the proof is similar to the proof of Proposition \ref{main.liouville}.
 \end{proof}


\begin{thebibliography}{EFK1}

\bibitem[AK]{AK}   D. ~V. Anosov and A. ~B. Katok,
\newblock {\it New examples in smooth ergodic theory. Ergodic diffeomorphisms},
\newblock {Transactions of the Moscow Mathematical Society} 23, 1--35, 1970.

\bibitem[D]{douady} R. Douady, 
\newblock {\it Stabilit\'e  ou instabilit\'e des points fixes elliptiques}, 
\newblock {Ann. Sci. Ec. Norm. Sup.} 21, no. 1, 1--46, 1988.



\bibitem[EFK1]{EFKchenciner} L. H. Eliasson, B. Fayad, R. Krikorian  
\newblock {\it  KAM-tori near an analytic elliptic fixed point}, 
\newblock {Regular and Chaotic Dynamics} 18, no. 6, 801--831, 2013.

\bibitem[EFK2]{EFK} L. H. Eliasson, B. Fayad, R. Krikorian  
\newblock {\it   On the stability of KAM tori}, 
Analytic KAM-tori are never isolated
\newblock { Duke Math. J.} 164,  no. 9, 1733--1775, 2015.

\bibitem[FH]{FH} A. Fathi and M. Herman,
\newblock {\it  Existence de diff\' eomorphismes minimaux},
\newblock {Asterisque} 49, 
39--59, 1977.

\bibitem[FK]{FK} B. Fayad and R. Krikorian,
\newblock {\it  Herman's last geometric theorem},
\newblock {Ann. Sci. \' Ec. Norm. Sup\'er.} 42, 
193--219, 2009.

\bibitem[H1]{H} M. Herman,
\newblock {\it  
Some open problems in dynamical systems, Proceedings of the International Congress of Mathematicians, Vol. II (Berlin, 1998)},
\newblock {Doc. Math. 1998 Extra Vol. II}, 797--808, 1998.



\bibitem[H2]{H2} M. Herman, 
\newblock {\it  Courbes invariantes sans propri\'et\'e de torsion 1}
\newblock Archives Herman, to appear. 






\bibitem[R]{R} H. R{\"u}ssmann,
\newblock {\it  \"Uber die Normalform analytischer Hamiltonscher Differentialgleichungen 
in der N\"ahe einer Gleichgewichtsl\"osung},
\newblock {Math. Ann.} 169, 
55--72, 1967.


\bibitem[S]{Sev} M. B. Sevryuk, 
\newblock {\it KAM-stable Hamiltonians}, 
\newblock {J. Dynam. Control Systems 1}  no. 3, 351–366, 1995. 

\end{thebibliography}
\end{document}